\newtheorem{theorem}{Theorem}
\theoremstyle{plain}
\newtheorem{corollary}{Corollary}
\newtheorem{definition}{Definition}
\newtheorem{example}{Example}
\newtheorem{lemma}{Lemma}
\newtheorem{notation}{Notation}
\newtheorem{proposition}{Proposition}
\newtheorem{remark}{Remark}
\numberwithin{equation}{section}
\begin{document}
\title{Global attractors for semigroup actions on uniformizable spaces}
\author{Josiney A. Souza and Richard W. M. Alves}
\address{Departamento de Matem\'{a}tica, Universidade Estadual de Maring\'{a}%
, Brasil.}
\email{jasouza3@uem.br}
\subjclass[2000]{ 37B35, 37B25}
\keywords{Global attractor; global uniform attractor; asymptotic
compactness; measure of noncompactness.}

\begin{abstract}
In this paper the notion of global attractor is extended from the setting of
semigroup actions on metric spaces to the setting of semigroup actions on
uniformizable spaces. General conditions for the existence of global
attractor are discussed and its relationship with the global uniform
attractor is presented.
\end{abstract}

\maketitle

\section{Introduction}

In this paper we extend the notion of global attractor to the setting of
semigroup actions on uniformizable spaces. This setting covers a wide scope
of mathematical analysis, since almost every topological space studied in
this branch of mathematics is at least uniformizable (completely regular):
metric and pseudometric spaces, locally compact regular spaces, manifolds,
topological groups, Lie groups, paracompact regular spaces, normal regular
spaces, etc. Besides, semigroup actions include semigroups of operators,
multi-time dynamical systems, polisystems, control systems, Lie group
actions, Ellis actions, etc.. Then studying global attractors in this
general setting contributes to the branch of mathematical analysis and
topological dynamics.

The concept of global attractor for general semigroup actions was defined in 
\cite{Stephanie}. Let $S$ be a semigroup acting on a metric space $M$ and $%
\mathcal{F}$ a filter basis on the subsets of $S$. The global $\mathcal{F}$%
-attractor for $\left( S,M\right) $ is an invariant compact subset $\mathcal{%
A}$ of $M$ that attracts every bounded subset $Y$ of $M$, which means that 
\begin{equation*}
\lim_{t_{\lambda }\rightarrow _{\mathcal{F}}\infty }\mathrm{dist}\left(
t_{\lambda }Y;\mathcal{A}\right) =0
\end{equation*}%
for every $\mathcal{F}$-divergent net $\left( t_{\lambda }\right) _{\lambda
\in \Lambda }$ in $S$. If the global $\mathcal{F}$-attractor exists then its
region of uniform $\mathcal{F}$-attraction $\mathrm{A}_{u}\left( \mathcal{A},%
\mathcal{F}\right) $ coincides with the whole space $M$, that is, 
\begin{equation*}
M=\mathrm{A}_{u}\left( \mathcal{A},\mathcal{F}\right) =\left\{ x\in
M:J\left( x,\mathcal{F}\right) \neq \emptyset \text{ and }J\left( x,\mathcal{%
F}\right) \subset \mathcal{A}\right\}
\end{equation*}%
where $J\left( x,\mathcal{F}\right) $ is the forward $\mathcal{F}$%
-prolongational limit set of $x$. Thus the global $\mathcal{F}$-attractor is
the global uniform $\mathcal{F}$-attractor for $\left( S,M\right) $. The
converse holds if the family $\mathcal{F}$ satisfies certain translation
hypothesis and the action is eventually compact and $\mathcal{F}$%
-asymptotically compact (\cite[Theorem 3]{Stephanie}).

Global uniform attractors for bitransformation semigroups were recently
studied in \cite{So4}. In this work, the phase space is a general
topological space, since the concept of global uniform attractor does not
depend on a metric. We have discovered that the notion of global attractor
defined in \cite{Stephanie} is also independent of metrization on the phase
space and it can be defined in completely regular spaces.

For defining attraction for a semigroup action on completely regular space,
we consider an admissible structure instead of a uniform structure on the
phase space (\cite{Richard}). In addition, we reproduce the notions of
Hausdorff semidistance and the measure of noncompactness. Then we extend all
results of \cite{Stephanie}, discussing necessary and sufficient conditions
for the existence of the global attractor: eventual compactness, eventual
boundedness, bounded dissipativeness, and asymptotic compactness. We also
introduce the notion of limit compact semigroup action and use the
Cantor--Kuratowski theorem, proved in \cite{Richard2}, to provide a
relationship between asymptotic compactness and limit compactness.

To illustrate problems in the general setting of this paper, we provide
examples of semigroup actions on function spaces, which are fundamental
components of mathematical analysis. For while, let $E^{E}$ be the function
space of a normed vector space $E$ with the pointwise convergence topology.
By considering the multiplicative semigroup of positive integers $\mathbb{N}$%
, let $\mu :\mathbb{N}\times E^{E}\rightarrow E^{E}$ be the action defined
as $\mu \left( n,f\right) =f^{n}$. Then $\mu $ defines a semigroup action
that is not a classical semiflow and $E^{E}$ is not metrizable. However, $%
E^{E}$ is uniformizable with the uniformity of pointwise convergence. The
subspace $X\subset E^{E}$ of all contraction operators with common Lipschitz
constant $L<1$ is $\mu $-invariant. The trivial operator $T_{0}\equiv 0$ is
the global attractor for the restricted action $\mu :\mathbb{N}\times
X\rightarrow X$ (Example \ref{Ex4}).

\section{Preliminaries on admissible spaces}

This section contains the basic definitions and properties of admissible
spaces. We refer to \cite{Richard}, \cite{patrao2}, and \cite{So} for the
previous development of admissible spaces.

Let $X$ be a completely regular space and $\mathcal{U},\mathcal{V}$
coverings of $X$. We write $\mathcal{V}\leqslant \mathcal{U}$ if $\mathcal{V}
$ is a refinement of $\mathcal{U}$. One says $\mathcal{V}$ double-refines $%
\mathcal{U}$, or $\mathcal{V}$ is a double-refinement of $\mathcal{U}$,
written $\mathcal{V}\leqslant \frac{1}{2}\mathcal{U}$ or $2\mathcal{V}%
\leqslant \mathcal{U}$, if for every $V,V^{\prime }\in \mathcal{V}$, with $%
V\cap V^{\prime }\neq \emptyset $, there is $U\in \mathcal{U}$ such that $%
V\cup V^{\prime }\subset U$. We write $\mathcal{V}\leqslant \frac{1}{2^{2}}%
\mathcal{U}$ if there is a covering $\mathcal{W}$ of $X$ such that $\mathcal{%
V}\leqslant \frac{1}{2}\mathcal{W}$ and $\mathcal{W}\leqslant \frac{1}{2}%
\mathcal{U}$. Inductively, we write $\mathcal{V}\leqslant \frac{1}{2^{n}}%
\mathcal{U}$ if there is $\mathcal{W}$ with $\mathcal{V}\leqslant \frac{1}{2}%
\mathcal{W}$ and $\mathcal{W}\leqslant \frac{1}{2^{n-1}}\mathcal{U}$.

For a covering $\mathcal{U}$ of $X$ and a subset $Y\subset X$, the \emph{star%
} of $Y$ with respect to $\mathcal{U}$ is the set 
\begin{equation*}
\mathrm{St}\left[ Y,\mathcal{U}\right] =\bigcup \left\{ U\in \mathcal{U}%
:Y\cap U\neq \emptyset \right\} \text{.}
\end{equation*}%
If $Y=\left\{ x\right\} $, we usually write $\mathrm{St}\left[ x,\mathcal{U}%
\right] $ rather than $\mathrm{St}\left[ \left\{ x\right\} ,\mathcal{U}%
\right] $. Then one has $\mathrm{St}\left[ Y,\mathcal{U}\right]
=\bigcup\limits_{x\in Y}\mathrm{St}\left[ x,\mathcal{U}\right] $ for every
subset $Y\subset X$.

\begin{definition}
\label{Admiss} A family $\mathcal{O}$ of open coverings of $X$ is said to be 
\textbf{admissible} if it satisfies the following properties:

\begin{enumerate}
\item For each $\mathcal{U}\in \mathcal{O}$, there is $\mathcal{V}\in 
\mathcal{O}$ such that $\mathcal{V}\leqslant \frac{1}{2}\mathcal{U}$;

\item If $Y\subset X$ is an open set and $K\subset Y$ is a compact subset of 
$X$ then there is an open covering $\mathcal{U}\in \mathcal{O}$ such that $%
\mathrm{St}\left[ K,\mathcal{U}\right] \subset Y$;

\item For any $\mathcal{U},\mathcal{V}\in \mathcal{O}$, there is $\mathcal{W}%
\in \mathcal{O}$ such that $\mathcal{W}\leqslant \mathcal{U}$ and $\mathcal{W%
}\leqslant \mathcal{V}$.
\end{enumerate}
\end{definition}

The properties 1 and 2 of Definition \ref{Admiss} guarantee that the stars $%
\mathrm{St}\left[ x,\mathcal{U}\right] $, for $x\in X$ and $\mathcal{U}\in 
\mathcal{O}$, form a basis for the topology of $X$, while the property 3
allows to direct $\mathcal{O}$ by refinements. Definition \ref{Admiss} can
be simplified by requiring that the family $\mathcal{O}$ of open coverings
of $X$ satisfies the following:

\begin{enumerate}
\item[$\left( 1\right) ^{\prime }$] For any $\mathcal{U},\mathcal{V}\in 
\mathcal{O}$, there is $\mathcal{W}\in \mathcal{O}$ such that $\mathcal{W}%
\leqslant \frac{1}{2}\mathcal{U}$ and $\mathcal{W}\leqslant \frac{1}{2}%
\mathcal{V}$.

\item[$\left( 2\right) ^{\prime }$] The stars $\mathrm{St}\left[ x,\mathcal{U%
}\right] $, for $x\in X$ and $\mathcal{U}\in \mathcal{O}$, form a basis for
the topology of $X$.
\end{enumerate}

\begin{remark}
Let $\mathcal{O}$ be an admissible family of open coverings of $X$. For
every set $Y\subset X$, one has $\mathrm{cls}\left( Y\right) =\underset{%
\mathcal{U}\in \mathcal{O}}{\bigcap }\mathrm{St}\left[ Y,\mathcal{U}\right] $%
.
\end{remark}

\begin{definition}
\label{Full}An admissible family $\mathcal{O}$ of open coverings of $X$ is
said to be \textbf{replete} if it satisfies the following additional
conditions:

\begin{enumerate}
\item $X=\bigcup\limits_{\mathcal{U}\in \mathcal{O}}\mathrm{St}\left[ x,%
\mathcal{U}\right] $ for every $x\in X$.

\item For any $\mathcal{U},\mathcal{V}\in \mathcal{O}$, there is $\mathcal{W}%
\in \mathcal{O}$ such that $\mathcal{U}\leqslant \frac{1}{2}\mathcal{W}$ and 
$\mathcal{V}\leqslant \frac{1}{2}\mathcal{W}$.
\end{enumerate}
\end{definition}

\begin{example}
\label{Ex1}

\begin{enumerate}
\item Every covering uniformity $\mathcal{O}$ of $X$ is a replete admissible
family.

\item If $X$ is a paracompact Hausdorff space then the family $\mathcal{O}%
\left( X\right) $ of all open coverings of $X$ is a replete admissible
family.

\item If $X$ is a compact Hausdorff space then the family $\mathcal{O}_{f}$
of all finite open coverings of $X$ is a replete admissible family.

\item If $\left( X,\mathrm{d}\right) $ is a metric space then the family $%
\mathcal{O}_{\mathrm{d}}$ of the coverings $\mathcal{U}_{\varepsilon
}=\left\{ \mathrm{B}_{\mathrm{d}}\left( x,\varepsilon \right) :x\in
X\right\} $ by $\varepsilon $-balls, for $\varepsilon >0$, is a replete
admissible family.
\end{enumerate}
\end{example}

If $\mathcal{O}$ is an admissible family of $X$ that is upwards hereditary
then $\mathcal{O}$ is clearly replete (Example \ref{Ex1}, items 1 and 2).
However, a replete admissible family need not be upwards hereditary (Example %
\ref{Ex1}, items 3 and 4). In general, for a given admissible family $%
\mathcal{O}$ of $X$, we may consider the replete admissible family $%
\widetilde{\mathcal{O}}$, generated by $\mathcal{O}$, given by 
\begin{equation*}
\widetilde{\mathcal{O}}=\left\{ \mathcal{V}:\mathcal{V}\text{ is open
covering of }X\text{ and }\mathcal{U}\leqslant \mathcal{V}\text{ for some }%
\mathcal{U}\in \mathcal{O}\right\} .
\end{equation*}

From now on, $X$ is a fixed completely regular space endowed with a replete
admissible family of open coverings $\mathcal{O}$. Let $\mathcal{P}\left( 
\mathcal{O}\right) $ denote the power set of $\mathcal{O}$ and consider the
partial ordering relation on $\mathcal{P}\left( \mathcal{O}\right) $ given
by inverse inclusion: for $\mathcal{E}_{1},\mathcal{E}_{2}\in \mathcal{P}%
\left( \mathcal{O}\right) $%
\begin{equation*}
\mathcal{E}_{1}\prec \mathcal{E}_{2}\text{ if and only if }\mathcal{E}%
_{1}\supset \mathcal{E}_{2}.
\end{equation*}%
Concerning this relation, $\mathcal{O}$ is the smallest element in $\mathcal{%
P}\left( \mathcal{O}\right) $, or in other words, $\mathcal{O}$ is the lower
bound for $\mathcal{P}\left( \mathcal{O}\right) $. On the other hand, the
empty set $\emptyset $ is the upper bound for $\mathcal{P}\left( \mathcal{O}%
\right) $. Intuitively, $\mathcal{O}$ is the \textquotedblleft
zero\textquotedblright\ and $\emptyset $ is the \textquotedblleft
infinity\textquotedblright . For each $\mathcal{E}\in \mathcal{P}\left( 
\mathcal{O}\right) $ and $n\in \mathbb{N}^{\ast }$ we define the set $n%
\mathcal{E}$ in $\mathcal{P}\left( \mathcal{O}\right) $ by 
\begin{equation*}
n\mathcal{E}=\left\{ \mathcal{U}\in \mathcal{O}:\text{there is }\mathcal{V}%
\in \mathcal{E}\text{ such that }\mathcal{V}\leqslant \tfrac{1}{2^{n}}%
\mathcal{U}\right\} .
\end{equation*}%
This operation is order-preserving, that is, if $\mathcal{E}\prec \mathcal{D}
$ then $n\mathcal{E}\prec n\mathcal{D}$. In fact, if $\mathcal{U}\in n%
\mathcal{D}$ then there is $\mathcal{V}\in \mathcal{D}$ such that $\mathcal{V%
}\leqslant \tfrac{1}{2^{n}}\mathcal{U}$. As $\mathcal{D}\subset \mathcal{E}$%
, it follows that $\mathcal{U}\in n\mathcal{E}$, and therefore $n\mathcal{E}%
\prec n\mathcal{D}$. Note also that $n\mathcal{O}=\mathcal{O}$, for every $%
n\in \mathbb{N}^{\ast }$, since for each $\mathcal{U}\in \mathcal{O}$ there
is $\mathcal{V}\in \mathcal{O}$ such that $\mathcal{V}\leqslant \tfrac{1}{%
2^{n}}\mathcal{U}$, that is, $\mathcal{U}\in n\mathcal{O}$.

We often consider the following notion of convergence in $\mathcal{P}\left( 
\mathcal{O}\right) $.

\begin{definition}
\label{Convergence}We say that a net $\left( \mathcal{E}_{\lambda }\right) $
in $\mathcal{P}\left( \mathcal{O}\right) $ \textbf{converges} to $\mathcal{O}
$, written $\mathcal{E}_{\lambda }\rightarrow \mathcal{O}$,\ if for every $%
\mathcal{U}\in \mathcal{O}$ there is a $\lambda _{0}$ such that $\mathcal{U}%
\in \mathcal{E}_{\lambda }$ whenever $\lambda \geq \lambda _{0}$.
\end{definition}

It is easily seen that $\mathcal{D}_{\lambda }\prec \mathcal{E}_{\lambda }$
and $\mathcal{E}_{\lambda }\rightarrow \mathcal{O}$ implies $\mathcal{D}%
_{\lambda }\rightarrow \mathcal{O}$. Moreover, $\mathcal{E}_{\lambda
}\rightarrow \mathcal{O}$ implies $n\mathcal{E}_{\lambda }\rightarrow 
\mathcal{O}$ for every $n\in \mathbb{N}^{\ast }$ (see \cite[Proposition 1]%
{Richard2}).

We also need the auxiliary function $\rho :X\times X\rightarrow \mathcal{P}%
\left( \mathcal{O}\right) $ given by 
\begin{equation*}
\rho \left( x,y\right) =\left\{ \mathcal{U}\in \mathcal{O}:y\in \mathrm{St}%
\left[ x,\mathcal{U}\right] \right\} .
\end{equation*}%
Note that the value $\rho \left( x,y\right) $ is upwards hereditary, that
is, if $\mathcal{U}\leqslant \mathcal{V}$ with $\mathcal{U}\in \rho \left(
x,y\right) $ then $\mathcal{V}\in \rho \left( x,y\right) $. The following
properties of the function $\rho $ are proved in \cite[Propositions 2 and 3]%
{Richard2}.

\begin{proposition}
\label{P1}

\begin{enumerate}
\item $\rho \left( x,y\right) =\rho \left( y,x\right) $ for all $x,y\in X$.

\item $\mathcal{O}\prec \rho \left( x,y\right) $, for all $x,y\in X$, and $%
\mathcal{O}=\rho \left( x,x\right) $.

\item If $X$ is Hausdorff, $\mathcal{O}=\rho \left( x,y\right) $ if and only
if $x=y$.

\item $\rho \left( x,y\right) \prec n\left( \rho \left( x,x_{1}\right) \cap
\rho \left( x_{1},x_{2}\right) \cap \ldots \cap \rho \left( x_{n},y\right)
\right) $ for all $x,y,x_{1},...,x_{n}\in X$.

\item A net $\left( x_{\lambda }\right) $ in $X$ converges to $x$ if and
only if $\rho \left( x_{\lambda },x\right) \rightarrow \mathcal{O}$.
\end{enumerate}
\end{proposition}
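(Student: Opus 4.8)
The plan is to handle the five items separately, since only item (4) requires genuine work; the others reduce to unwinding the definition $\rho\left(x,y\right)=\left\{\mathcal{U}\in\mathcal{O}:y\in\mathrm{St}\left[x,\mathcal{U}\right]\right\}$ together with the elementary observation that $\mathcal{U}\in\rho\left(x,y\right)$ holds exactly when some member $U\in\mathcal{U}$ contains both $x$ and $y$. For item (1), this description is symmetric in $x$ and $y$, so $\rho\left(x,y\right)=\rho\left(y,x\right)$. For item (2), the inclusion $\rho\left(x,y\right)\subset\mathcal{O}$ is immediate from the definition, which under the inverse-inclusion order is precisely $\mathcal{O}\prec\rho\left(x,y\right)$; and because every $\mathcal{U}\in\mathcal{O}$ is a covering, $x$ lies in some member of $\mathcal{U}$, so $x\in\mathrm{St}\left[x,\mathcal{U}\right]$ for all $\mathcal{U}$ and hence $\rho\left(x,x\right)=\mathcal{O}$.

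For item (3), the forward implication is contained in item (2). For the converse I would assume $X$ Hausdorff and $\mathcal{O}=\rho\left(x,y\right)$, so that $y\in\mathrm{St}\left[x,\mathcal{U}\right]$ for every $\mathcal{U}\in\mathcal{O}$; the Remark then gives $y\in\bigcap_{\mathcal{U}\in\mathcal{O}}\mathrm{St}\left[x,\mathcal{U}\right]=\mathrm{cls}\left(\left\{x\right\}\right)=\left\{x\right\}$, the last equality using that points are closed in a Hausdorff space, whence $y=x$. For item (5), I would first note that, by the symmetry from item (1), $\mathcal{U}\in\rho\left(x_{\lambda},x\right)$ is equivalent to $x_{\lambda}\in\mathrm{St}\left[x,\mathcal{U}\right]$; thus $\rho\left(x_{\lambda},x\right)\rightarrow\mathcal{O}$ asserts exactly that for each $\mathcal{U}$ one eventually has $x_{\lambda}\in\mathrm{St}\left[x,\mathcal{U}\right]$. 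Since the stars $\mathrm{St}\left[x,\mathcal{U}\right]$ form a neighborhood basis at $x$ (property $(2)'$), this is precisely the statement that $x_{\lambda}\rightarrow x$.

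The principal obstacle is item (4), an analog of the triangle inequality. Setting $y_{0}=x$, $y_{i}=x_{i}$ for $1\leq i\leq n$, and $y_{n+1}=y$, I would unwind $\mathcal{U}\in n\bigl(\bigcap_{i=0}^{n}\rho\left(y_{i},y_{i+1}\right)\bigr)$: it yields a covering $\mathcal{V}$ with $\mathcal{V}\leqslant\tfrac{1}{2^{n}}\mathcal{U}$ and members $V_{0},\ldots,V_{n}\in\mathcal{V}$ with $y_{i},y_{i+1}\in V_{i}$, so that $V_{i-1}\cap V_{i}\ni y_{i}$, i.e. a $\mathcal{V}$-chain of $n+1$ sets joining $x$ to $y$. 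The core is a chaining lemma: if $\mathcal{V}\leqslant\tfrac{1}{2}\mathcal{W}$ and $V_{0},\ldots,V_{k-1}$ is such a chain, then pairing consecutive sets and invoking the double-refinement property to get $V_{2j}\cup V_{2j+1}\subset W_{j}$ (valid since $V_{2j}\cap V_{2j+1}\neq\emptyset$) produces a $\mathcal{W}$-chain of $\lceil k/2\rceil$ sets whose union still contains that of the original, the overlaps $V_{2j+1}\cap V_{2j+2}$ ensuring the merged sets again overlap consecutively. Expanding $\mathcal{V}\leqslant\tfrac{1}{2^{n}}\mathcal{U}$ as $\mathcal{V}=\mathcal{W}_{0}\leqslant\tfrac{1}{2}\mathcal{W}_{1}\leqslant\cdots\leqslant\tfrac{1}{2}\mathcal{W}_{n}=\mathcal{U}$ and applying the lemma $n$ times collapses the $n+1$ sets to a single member of $\mathcal{U}$ (using $n+1\leq 2^{n}$ for $n\geq 1$), which then contains both $x$ and $y$; hence $\mathcal{U}\in\rho\left(x,y\right)$, giving $\rho\left(x,y\right)\prec n\left(\cdots\right)$. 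The delicate points are the ceiling bookkeeping ensuring $n$ halvings suffice for $n+1$ links (and that the unpaired set in an odd chain, which lies in some member of $\mathcal{W}$ since a double-refinement is in particular a refinement, does not break the chain), and verifying that the endpoints $x,y$ survive in the collapsing union at every stage.
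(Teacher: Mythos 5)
The paper does not actually prove this proposition; it simply cites \cite[Propositions 2 and 3]{Richard2}, so there is no in-paper argument to compare against. Your proof is correct as it stands: items (1), (2), (3), and (5) are indeed immediate from the definition of $\rho$, the Remark on closures, and property $\left( 2\right) ^{\prime }$, and your chaining argument for item (4) is sound --- the merged sets $W_{j}\supset V_{2j}\cup V_{2j+1}$ overlap consecutively because $W_{j}\cap W_{j+1}\supset V_{2j+1}\cap V_{2j+2}\neq \emptyset $, a double-refinement is in particular a refinement (take $V=V^{\prime }$ in the definition) so the unpaired set in an odd chain is absorbed, and the iterated ceiling $\lceil \left( n+1\right) /2^{n}\rceil =1$ for $n\geq 1$ closes the induction. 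This is presumably essentially the argument of the cited reference.
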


For a given point $x\in X$ and a set $A\subset X$, we define the set $\rho
\left( x,A\right) \in \mathcal{P}\left( \mathcal{O}\right) $ by%
\begin{equation*}
\rho \left( x,A\right) =\bigcup\limits_{y\in A}\rho \left( x,y\right) .
\end{equation*}%
For two nonempty subsets $A,B\subset X$, we define the collection $\rho
_{A}\left( B\right) \in \mathcal{P}\left( \mathcal{O}\right) $ by 
\begin{equation*}
\rho _{A}\left( B\right) =\bigcap\limits_{b\in B}\rho \left( b,A\right)
=\bigcap\limits_{b\in B}\bigcup\limits_{a\in A}\rho \left( b,a\right) .
\end{equation*}

It is easily seen that $\rho \left( x,A\right) \prec \rho \left( x,a\right) $%
, for all $a\in A$, and $\rho \left( x,B\right) \prec \rho \left( x,A\right) 
$ whenever $B\supset A$.

\begin{proposition}
\label{R4}For a given point $x\in X$ and subsets $A,B\subset X$, the
following properties hold:

\begin{enumerate}
\item $\rho \left( x,A\right) =\mathcal{O}$ if and only if $x\in \mathrm{cls}%
\left( A\right) $.

\item $\rho \left( x,\mathrm{cls}\left( A\right) \right) =\rho \left(
x,A\right) .$

\item $\rho _{A}\left( B\right) =\rho _{\mathrm{cls}\left( A\right) }\left(
B\right) .$

\item $B\subset \mathrm{cls}\left( A\right) $ if and only if $\rho
_{A}\left( B\right) =\mathcal{O}$.
\end{enumerate}
\end{proposition}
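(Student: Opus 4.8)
The plan is to prove each of the four parts of Proposition \ref{R4} by unwinding the definitions and appealing systematically to the Remark that characterizes closure as $\mathrm{cls}(A)=\bigcap_{\mathcal{U}\in\mathcal{O}}\mathrm{St}[A,\mathcal{U}]$, together with the defining formulas $\rho(x,A)=\bigcup_{y\in A}\rho(x,y)$ and $\rho_A(B)=\bigcap_{b\in B}\rho(b,A)$. Throughout I would keep in mind that equality with $\mathcal{O}$ in the ordering $\prec$ means the set in question equals all of $\mathcal{O}$ (since $\mathcal{O}$ is the smallest element), so an identity like $\rho(x,A)=\mathcal{O}$ asserts that \emph{every} covering $\mathcal{U}\in\mathcal{O}$ lies in $\rho(x,A)$.

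For part (1), I would argue that $\rho(x,A)=\mathcal{O}$ means that for every $\mathcal{U}\in\mathcal{O}$ there exists $y\in A$ with $\mathcal{U}\in\rho(x,y)$, i.e. $y\in\mathrm{St}[x,\mathcal{U}]$. By the symmetry in Proposition \ref{P1}(1) this is equivalent to $x\in\mathrm{St}[y,\mathcal{U}]$ for some $y\in A$, hence to $x\in\mathrm{St}[A,\mathcal{U}]$; requiring this for all $\mathcal{U}$ gives exactly $x\in\bigcap_{\mathcal{U}\in\mathcal{O}}\mathrm{St}[A,\mathcal{U}]=\mathrm{cls}(A)$ by the Remark. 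Part (2) then follows naturally: since $A\subset\mathrm{cls}(A)$, the monotonicity property $\rho(x,B)\prec\rho(x,A)$ for $B\supset A$ gives one inclusion, and for the reverse I would show $\rho(x,\mathrm{cls}(A))\supset\rho(x,A)$ fails to add anything by checking that any $\mathcal{U}\in\rho(x,z)$ with $z\in\mathrm{cls}(A)$ already lies in $\rho(x,A)$ --- here I would pass to a double-refinement $\mathcal{V}\leqslant\tfrac12\mathcal{U}$ (admissibility property 1), use $z\in\mathrm{St}[A,\mathcal{V}]$ to find $a\in A$ sharing a $\mathcal{V}$-set with $z$, and then combine stars to land in $\mathrm{St}[x,\mathcal{U}]$.

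Parts (3) and (4) are the collection-valued analogues and I would reduce them to the pointwise statements just proved. For part (3), writing $\rho_A(B)=\bigcap_{b\in B}\rho(b,A)$ and $\rho_{\mathrm{cls}(A)}(B)=\bigcap_{b\in B}\rho(b,\mathrm{cls}(A))$, the equality is immediate from part (2) applied termwise with $x=b$. For part (4), $\rho_A(B)=\mathcal{O}$ holds iff $\rho(b,A)=\mathcal{O}$ for every $b\in B$ (since an intersection of subsets of $\mathcal{O}$ equals $\mathcal{O}$ exactly when each factor does, as $\mathcal{O}$ is the bottom element), which by part (1) holds iff $b\in\mathrm{cls}(A)$ for every $b\in B$, i.e. $B\subset\mathrm{cls}(A)$.

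The main obstacle I anticipate is the reverse inclusion in part (2), which is the only place requiring genuine use of the admissible-family axioms rather than pure set-theoretic bookkeeping: one must convert membership of $z$ in the closure of $A$ into a covering-level statement and then merge two stars into a single star of a coarser covering, which is precisely what the double-refinement condition (Definition \ref{Admiss}(1)) is designed to permit. Once that star-combining lemma is in hand, parts (1), (3), and (4) are essentially formal consequences of the definitions and Proposition \ref{P1}, so I would present part (2) carefully and treat the others more briskly.
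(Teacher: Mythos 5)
Your overall architecture matches the paper's: part (1) from the star characterization of closure, the easy inclusion of (2) from monotonicity, and (3), (4) as termwise consequences of (2) and (1); those reductions are all correct. The problem is the reverse inclusion of part (2), which is exactly the step you flagged as the crux. From $\mathcal{U}\in\rho(x,z)$ you get some $U_{0}\in\mathcal{U}$ with $x,z\in U_{0}$, and from $z\in\mathrm{St}\left[ A,\mathcal{V}\right] $ you get some $V\in\mathcal{V}$ and $a\in A$ with $z,a\in V$. But the double-refinement property $\mathcal{V}\leqslant\frac{1}{2}\mathcal{U}$ only merges \emph{two elements of} $\mathcal{V}$ with nonempty intersection into a single element of $\mathcal{U}$; here one of your two sets lives in $\mathcal{U}$ and the other in $\mathcal{V}$, so no axiom places $U_{0}\cup V$ inside a single member of $\mathcal{U}$. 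You also cannot first descend the hypothesis to the level of $\mathcal{V}$: the value $\rho(x,z)$ is upwards hereditary, so $\mathcal{U}\in\rho(x,z)$ does not give $\mathcal{V}\in\rho(x,z)$ for a refinement $\mathcal{V}$ of $\mathcal{U}$. What your route actually yields (via Proposition \ref{P1}, item 4) is $\rho(x,A)\prec 1\,\rho(x,\mathrm{cls}(A))$, which is strictly weaker than the asserted equality.

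The repair is simpler than what you attempted, and it is what the paper does: $\mathrm{St}\left[ x,\mathcal{U}\right] $ is an \emph{open} set containing the point $z\in\mathrm{cls}(A)$, hence it meets $A$; any $a\in\mathrm{St}\left[ x,\mathcal{U}\right] \cap A$ satisfies $\mathcal{U}\in\rho(x,a)\subset\rho(x,A)$. No refinement is needed --- the only topological input is that stars of open coverings are open. With that substitution the rest of your write-up goes through unchanged.
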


\begin{proof}
For item $\left( 1\right) $, note that $\rho \left( x,A\right) =\mathcal{O}$
if and only if $A\cap \mathrm{St}\left[ x,\mathcal{U}\right] \neq \emptyset $
for all $\mathcal{U}\in \mathcal{O}$. Since the collection $\left\{ \mathrm{%
St}\left[ x,\mathcal{U}\right] :\mathcal{U}\in \mathcal{O}\right\} $ is a
neighborhood base at $x$, it follows that $\rho \left( x,A\right) =\mathcal{O%
}$ if and only if $x\in \mathrm{cls}\left( A\right) $. For item $\left(
2\right) $, note that if $\mathcal{U}\in \rho \left( x,\mathrm{cls}\left(
A\right) \right) $ then $\mathcal{U}\in \rho \left( x,y\right) $ for some $%
y\in \mathrm{cls}\left( A\right) $. Hence $y\in \mathrm{St}\left[ x,\mathcal{%
U}\right] \cap \mathrm{cls}\left( A\right) $, and therefore $\mathrm{St}%
\left[ x,\mathcal{U}\right] \cap A\neq \emptyset $ because $\mathrm{St}\left[
x,\mathcal{U}\right] $ is open. It follows that $\mathcal{U}\in \rho \left(
x,a\right) $ for some $a\in A$. Thus $\mathcal{U}\in \rho \left( x,A\right) $
and we have the inclusion $\rho \left( x,\mathrm{cls}\left( A\right) \right)
\subset \rho \left( x,A\right) $. The inclusion $\rho \left( x,A\right)
\subset \rho \left( x,\mathrm{cls}\left( A\right) \right) $ is clear. Items $%
\left( 3\right) $ and $\left( 4\right) $ follow by item $\left( 2\right) $.
\end{proof}

\begin{proposition}
\label{R5} Let $A\subset X$ be a nonempty subset and take a convergent net $%
x_{\lambda }\rightarrow x$ in $X$. Then $x\in \mathrm{cls}\left( A\right) $
if and only if $\rho _{A}\left( x_{\lambda }\right) \rightarrow \mathcal{O}$.
\end{proposition}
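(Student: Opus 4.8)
The plan is to unwind both sides into statements about stars and then connect them through the double-refinement axiom of $\mathcal{O}$. First I would observe that, since $x_\lambda$ is a single point, $\rho_A(x_\lambda)=\rho(x_\lambda,A)=\bigcup_{a\in A}\rho(x_\lambda,a)$, and that by the symmetry of $\rho$ (Proposition \ref{P1}, item $1$) one has $\mathcal{U}\in\rho_A(x_\lambda)$ if and only if $\mathrm{St}\left[x_\lambda,\mathcal{U}\right]\cap A\neq\emptyset$. Likewise, Proposition \ref{R4}, item $1$, reduces the condition $x\in\mathrm{cls}(A)$ to $\rho(x,A)=\mathcal{O}$, i.e. $\mathrm{St}\left[x,\mathcal{U}\right]\cap A\neq\emptyset$ for every $\mathcal{U}\in\mathcal{O}$, while Proposition \ref{P1}, item $5$, translates the hypothesis $x_\lambda\rightarrow x$ into $\rho(x_\lambda,x)\rightarrow\mathcal{O}$. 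With these translations the proposition becomes a purely combinatorial statement about when covering elements are eventually shared.

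For the direct implication, I assume $x\in\mathrm{cls}(A)$ and fix an arbitrary $\mathcal{U}\in\mathcal{O}$; I must produce $\lambda_0$ so that $\mathrm{St}\left[x_\lambda,\mathcal{U}\right]\cap A\neq\emptyset$ for all $\lambda\geq\lambda_0$. Using admissibility I would choose $\mathcal{V}\in\mathcal{O}$ with $2\mathcal{V}\leqslant\mathcal{U}$. Since $x\in\mathrm{cls}(A)$, the star $\mathrm{St}\left[x,\mathcal{V}\right]$ meets $A$, so there are $V\in\mathcal{V}$ and $a\in A$ with $x,a\in V$. Since $\rho(x_\lambda,x)\rightarrow\mathcal{O}$, there is $\lambda_0$ with $\mathcal{V}\in\rho(x_\lambda,x)$, i.e. $x\in\mathrm{St}\left[x_\lambda,\mathcal{V}\right]$, for $\lambda\geq\lambda_0$; then pick $V'\in\mathcal{V}$ with $x_\lambda,x\in V'$. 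Now $x\in V\cap V'$, so the double-refinement $2\mathcal{V}\leqslant\mathcal{U}$ supplies $U\in\mathcal{U}$ with $V\cup V'\subset U$; hence $x_\lambda,a\in U$, whence $a\in\mathrm{St}\left[x_\lambda,\mathcal{U}\right]\cap A$. This gives $\mathcal{U}\in\rho_A(x_\lambda)$ for all $\lambda\geq\lambda_0$, and since $\mathcal{U}$ was arbitrary, $\rho_A(x_\lambda)\rightarrow\mathcal{O}$.

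The converse runs along the same lines with the roles of $x$ and $A$ interchanged. Assuming $\rho_A(x_\lambda)\rightarrow\mathcal{O}$, I fix $\mathcal{U}\in\mathcal{O}$ and again choose $\mathcal{V}$ with $2\mathcal{V}\leqslant\mathcal{U}$. From $\rho_A(x_\lambda)\rightarrow\mathcal{O}$ I get that $\mathrm{St}\left[x_\lambda,\mathcal{V}\right]$ eventually meets $A$, and from $\rho(x_\lambda,x)\rightarrow\mathcal{O}$ that eventually $x\in\mathrm{St}\left[x_\lambda,\mathcal{V}\right]$; for $\lambda$ beyond both thresholds there are $V,V'\in\mathcal{V}$ and $a\in A$ with $x_\lambda,a\in V$ and $x_\lambda,x\in V'$. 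As before $x_\lambda\in V\cap V'$ yields $U\in\mathcal{U}$ containing $V\cup V'$, so $x,a\in U$ and $\mathrm{St}\left[x,\mathcal{U}\right]\cap A\neq\emptyset$. Since $\mathcal{U}$ is arbitrary this gives $\rho(x,A)=\mathcal{O}$, hence $x\in\mathrm{cls}(A)$ by Proposition \ref{R4}, item $1$.

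I expect the only genuinely delicate point to be the gluing step through the double-refinement relation $2\mathcal{V}\leqslant\mathcal{U}$: one must ensure that the two covering elements supplied by the two separate facts (one from the membership of $a$, one from the convergence of the net) actually overlap so that the definition of double-refinement applies, which is exactly why the common point $x$ (resp. $x_\lambda$) is forced into both. The remaining work — coordinating the two eventual-ness thresholds into a single $\lambda_0$ and invoking the order-preserving convergence facts for $\mathcal{P}(\mathcal{O})$ — is routine bookkeeping. Alternatively, one could avoid explicit stars by establishing the inclusion $\rho_A(x_\lambda)\prec 1\big(\rho(x_\lambda,x)\cap\rho(x,A)\big)$ directly and then applying $\rho(x,A)=\mathcal{O}$ together with the fact that $\mathcal{E}_\lambda\rightarrow\mathcal{O}$ implies $n\mathcal{E}_\lambda\rightarrow\mathcal{O}$.
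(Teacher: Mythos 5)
Your proof is correct and takes essentially the same route as the paper: your converse is precisely the paper's argument, since the chain $\rho _{A}\left( x\right) \prec \rho \left( x,a_{\lambda }\right) \prec 1\left( \rho \left( x,x_{\lambda }\right) \cap \rho \left( x_{\lambda },a_{\lambda }\right) \right) \prec \left\{ \mathcal{U}\right\} $ used there is exactly your double-refinement gluing written in the $n\mathcal{E}$ notation. The only divergence is in the forward direction, where the paper skips the double refinement altogether by observing that $x\in \mathrm{cls}\left( A\right) $ gives $\rho \left( x_{\lambda },x\right) \subset \rho _{\mathrm{cls}\left( A\right) }\left( x_{\lambda }\right) =\rho _{A}\left( x_{\lambda }\right) $ (Proposition \ref{R4}), so $\rho \left( x_{\lambda },x\right) \rightarrow \mathcal{O}$ immediately yields $\rho _{A}\left( x_{\lambda }\right) \rightarrow \mathcal{O}$; your inline star argument re-derives this and is correct, just slightly longer.
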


\begin{proof}
Suppose that $x\in \mathrm{cls}\left( A\right) $ and take any $\mathcal{U}%
\in \mathcal{O}$. By Proposition \ref{P1}, item 5, there is $\lambda _{0}$
such that $\lambda \geq \lambda _{0}$ implies $\mathcal{U}\in \rho \left(
x_{\lambda },x\right) $. Hence 
\begin{equation*}
\mathcal{U}\in \rho \left( x_{\lambda },x\right) \subset \rho _{\mathrm{cls}%
\left( A\right) }\left( x_{\lambda }\right) =\rho _{A}\left( x_{\lambda
}\right)
\end{equation*}%
and therefore $\rho _{A}\left( x_{\lambda }\right) \rightarrow \mathcal{O}$.
On the other hand, suppose that $\rho _{A}\left( x_{\lambda }\right)
\rightarrow \mathcal{O}$ and take any $\mathcal{U}\in \mathcal{O}$. For $%
\mathcal{V}\in \mathcal{O}$ with $\mathcal{V}\leqslant \frac{1}{2}\mathcal{U}
$, there is $\lambda _{0}$ such that $\lambda \geq \lambda _{0}$ implies $%
\mathcal{V}\in \rho _{A}\left( x_{\lambda }\right) \cap \rho \left(
x_{\lambda },x\right) $. Then, for each $\lambda \geq \lambda _{0}$, there
is some $a_{\lambda }\in A$ such that $\mathcal{V}\in \rho \left( x_{\lambda
},a_{\lambda }\right) $. Hence 
\begin{equation*}
\rho _{A}\left( x\right) \prec \rho \left( x,a_{\lambda }\right) \prec
1\left( \rho \left( x,x_{\lambda }\right) \cap \rho \left( x_{\lambda
},a_{\lambda }\right) \right) \prec 1\left\{ \mathcal{V}\right\} \prec
\left\{ \mathcal{U}\right\} .
\end{equation*}%
This means that $\rho _{A}\left( x\right) =\mathcal{O}$, and therefore $x\in 
\mathrm{cl}\left( A\right) $.
\end{proof}

We now define the notions of bounded set and totally bounded set.

\begin{definition}
\label{Totallybounded}A nonempty subset $Y\subset X$ is called\ \textbf{%
bounded} with respect to $\mathcal{O}$ if there is some $\mathcal{U}\in 
\mathcal{O}$ such that $\mathcal{U}\in \rho \left( x,y\right) $ for all $%
x,y\in Y$; $Y$ is said to be \textbf{totally bounded} if for each $\mathcal{U%
}\in \mathcal{O}$ there is a finite sequence $x_{1},...,x_{n}$ such that $%
Y\subset \bigcup\limits_{i=1}^{n}\mathrm{St}\left[ x_{i},\mathcal{U}\right] $%
.
\end{definition}

\begin{remark}
\label{R1}A totally bounded set is bounded. Indeed, assume that $Y\subset X$
is totally bounded. For a given $\mathcal{U}\in \mathcal{O}$, there is a
finite cover $Y\subset \bigcup\limits_{i=1}^{n}\mathrm{St}\left[ x_{i},%
\mathcal{U}\right] $. Since $\mathcal{O}$ is replete, we can take $\mathcal{V%
}\in \mathcal{O}$ such that $\mathcal{U}\leqslant \mathcal{V}$ and $\mathcal{%
V}\in \rho \left( x_{i},x_{j}\right) $ for every pair $i,j\in \left\{
1,...,n\right\} $. For $x,y\in Y$, we have $x\in \mathrm{St}\left[ x_{i},%
\mathcal{U}\right] $ and $y\in \mathrm{St}\left[ x_{j},\mathcal{U}\right] $
for some $i,j$, hence 
\begin{equation*}
\rho \left( x,y\right) \prec 2\left( \rho \left( x,x_{i}\right) \cap \rho
\left( x_{i},x_{j}\right) \cap \rho \left( x_{j},y\right) \right) \prec
2\left\{ \mathcal{V}\right\}
\end{equation*}%
with $2\left\{ \mathcal{V}\right\} \neq \emptyset $.
\end{remark}

\begin{remark}
\label{R2}If $A\subset X$ is bounded then $\mathrm{St}\left[ A,\mathcal{U}%
\right] $ is also bounded, for any $\mathcal{U}\in \mathcal{O}$. In fact,
take $\mathcal{V}\in \mathcal{O}$ such that $\mathcal{V}\in \rho \left(
x,y\right) $ for all $x,y\in A$. If $u,v\in \mathrm{St}\left[ A,\mathcal{U}%
\right] $ then there is $x,y\in A$ such that $u\in \mathrm{St}\left[ x,%
\mathcal{U}\right] $ and $v\in \mathrm{St}\left[ y,\mathcal{U}\right] $.
Take $\mathcal{W}\in \mathcal{O}$ such that both coverings $\mathcal{U}$ and 
$\mathcal{V}$ refine $\mathcal{W}$. We have $\rho \left( u,v\right) \prec
2\left( \rho \left( u,x\right) \cap \rho \left( x,y\right) \cap \rho \left(
y,v\right) \right) \prec 2\left\{ \mathcal{W}\right\} $, and therefore $%
\mathrm{St}\left[ A,\mathcal{U}\right] $ is bounded.
\end{remark}

The difference between totally bounded set and compact set is the
completeness, as the following.

\begin{definition}
A net $\left( x_{\lambda }\right) _{\lambda \in \Lambda }$ in $X$ is $%
\mathcal{O}$\textbf{-Cauchy} (or just \textbf{Cauchy}) if for each $\mathcal{%
U}\in \mathcal{O}$ there is some $\lambda _{0}\in \Lambda $ such that $%
x_{\lambda _{1}}$ and $x_{\lambda _{2}}$ lie together in some element of $%
\mathcal{U}$, that is, $\mathcal{U}\in \rho \left( x_{\lambda
_{1}},x_{\lambda _{2}}\right) $, whenever $\lambda _{1},\lambda
_{2}\geqslant \lambda _{0}$. If every Cauchy net in $X$ converges then $X$
is called a \textbf{complete admissible space}.
\end{definition}

The space $X$ is compact if and only if it is complete and totally bounded (%
\cite[Theorem 2]{Richard2}).

We now define the notion of measure of noncompactness.

\begin{definition}
Let $Y\subset X$ be a nonempty set. The \textbf{star measure of
noncompactness} of $Y$ is the set $\alpha \left( Y\right) \in \mathcal{P}%
\left( \mathcal{O}\right) $ defined as 
\begin{equation*}
\alpha \left( Y\right) =\bigcup \left\{ \mathcal{U}\in \mathcal{O}:Y\text{
admits a finite cover }Y\subset \bigcup\limits_{i=1}^{n}\mathrm{St}\left[
x_{i},\mathcal{U}\right] \right\} .
\end{equation*}
\end{definition}

If $Y\subset X$ is a bounded set then $\alpha \left( Y\right) $ is nonempty.
The following properties of measure of noncompactness are proved in \cite[%
Proposition 10]{Richard2}.

\begin{proposition}
\label{P9}

\begin{enumerate}
\item $\alpha \left( Y\right) \prec \gamma \left( Y\right) \prec 1\alpha
\left( Y\right) .$

\item $\alpha \left( Y\right) \prec \alpha \left( Z\right) $ if $Y\subset Z$.

\item $\alpha \left( Y\cup Z\right) =\alpha \left( Y\right) \cap \alpha
\left( Y\right) .$

\item $\alpha \left( Y\right) \prec \alpha \left( \mathrm{cls}\left(
Y\right) \right) \prec 1\alpha \left( Y\right) .$
\end{enumerate}
\end{proposition}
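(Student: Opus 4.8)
The plan is to unwind the ordering $\prec$, which by definition is reverse inclusion, so that each assertion becomes a containment between subcollections of $\mathcal{O}$: a covering $\mathcal{U}$ lies in $\alpha(Y)$ exactly when $Y$ admits a finite cover by $\mathcal{U}$-stars, and $\mathcal{E}\prec\mathcal{D}$ reads as $\mathcal{E}\supseteq\mathcal{D}$. With this dictionary items (2) and (3) are immediate. For (2), if $Y\subseteq Z$ and $\mathcal{U}\in\alpha(Z)$, a finite $\mathcal{U}$-star cover of $Z$ already covers $Y$, so $\alpha(Z)\subseteq\alpha(Y)$, i.e. $\alpha(Y)\prec\alpha(Z)$. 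For (3), a finite $\mathcal{U}$-star cover of $Y\cup Z$ restricts to covers of $Y$ and of $Z$, while conversely the union of a $\mathcal{U}$-star cover of $Y$ and one of $Z$ covers $Y\cup Z$; hence $\mathcal{U}\in\alpha(Y\cup Z)$ iff $\mathcal{U}\in\alpha(Y)\cap\alpha(Z)$.

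For item (4) the first relation $\alpha(Y)\prec\alpha(\mathrm{cls}(Y))$ is just monotonicity from (2). The substance is $\alpha(\mathrm{cls}(Y))\prec 1\alpha(Y)$, i.e. $1\alpha(Y)\subseteq\alpha(\mathrm{cls}(Y))$. Given $\mathcal{U}\in 1\alpha(Y)$ I would choose $\mathcal{V}\in\alpha(Y)$ with $2\mathcal{V}\leqslant\mathcal{U}$ together with a finite cover $Y\subseteq\bigcup_{i=1}^{n}\mathrm{St}[x_i,\mathcal{V}]$. The key lemma to establish is the star-of-star estimate
\[
\mathrm{cls}\big(\mathrm{St}[x,\mathcal{V}]\big)\subseteq\mathrm{St}[x,\mathcal{U}]\qquad\text{whenever }2\mathcal{V}\leqslant\mathcal{U},
\]
whose proof combines the closure formula $\mathrm{cls}(A)=\bigcap_{\mathcal{W}\in\mathcal{O}}\mathrm{St}[A,\mathcal{W}]\subseteq\mathrm{St}[A,\mathcal{V}]$, applied to $A=\mathrm{St}[x,\mathcal{V}]$, with a single use of double-refinement: a point of $\mathrm{St}[\mathrm{St}[x,\mathcal{V}],\mathcal{V}]$ lies in some $V'\in\mathcal{V}$ meeting a $V\in\mathcal{V}$ with $x\in V$, and $2\mathcal{V}\leqslant\mathcal{U}$ supplies one $U\in\mathcal{U}$ with $V\cup V'\subseteq U$, placing the point in $\mathrm{St}[x,\mathcal{U}]$. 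Applying this termwise yields $\mathrm{cls}(Y)\subseteq\bigcup_{i=1}^{n}\mathrm{St}[x_i,\mathcal{U}]$, hence $\mathcal{U}\in\alpha(\mathrm{cls}(Y))$.

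Item (1) compares $\alpha$ with the companion (Kuratowski-type) measure $\gamma$, built from finite covers $Y\subseteq\bigcup_i A_i$ whose pieces are $\mathcal{U}$-bounded in the sense that $\mathcal{U}\in\rho(a,a')$ for all $a,a'\in A_i$. For $\alpha(Y)\prec\gamma(Y)$, i.e. $\gamma(Y)\subseteq\alpha(Y)$, I would take such a cover and, choosing a center $x_i\in A_i$ in each piece, observe that $\mathcal{U}\in\rho(x_i,a)$ means $a\in\mathrm{St}[x_i,\mathcal{U}]$, so $A_i\subseteq\mathrm{St}[x_i,\mathcal{U}]$ and the stars cover $Y$. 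For $\gamma(Y)\prec 1\alpha(Y)$, i.e. $1\alpha(Y)\subseteq\gamma(Y)$, I would pick $\mathcal{V}$ with $2\mathcal{V}\leqslant\mathcal{U}$ and $Y\subseteq\bigcup_i\mathrm{St}[x_i,\mathcal{V}]$, and set $A_i=Y\cap\mathrm{St}[x_i,\mathcal{V}]$; for $a,a'\in A_i$ the elements of $\mathcal{V}$ containing them both meet at $x_i$, so double-refinement places $a,a'$ in a common $U\in\mathcal{U}$, showing each $A_i$ is $\mathcal{U}$-bounded and hence $\mathcal{U}\in\gamma(Y)$.

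The main obstacle I anticipate is exactly the two factor-one relations, $\alpha(\mathrm{cls}(Y))\prec 1\alpha(Y)$ in (4) and $\gamma(Y)\prec 1\alpha(Y)$ in (1): in both one must convert a cover refined by $\mathcal{V}$ into a genuine $\mathcal{U}$-object, and the naive attempt to show that a whole $\mathcal{V}$-star lies in a single element of $\mathcal{U}$ fails, since double-refinement is only a pairwise condition. The resolution in each case is to invoke $2\mathcal{V}\leqslant\mathcal{U}$ only on pairs $(V,V')$ that share the center $x_i$ (or, in (4), a witnessing intersection point), which is all that the star-of-star estimate and the $\mathcal{U}$-bounded-piece argument actually require. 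Everything else reduces to routine manipulation of the reverse-inclusion ordering.
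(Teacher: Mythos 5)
Your proof is correct. Note, however, that the paper offers no proof of this proposition to compare against: it is quoted verbatim from the reference \cite[Proposition 10]{Richard2}, and in particular the measure $\gamma$ appearing in item (1) is never defined in this paper. You therefore had to supply a definition yourself, and the one you chose --- $\mathcal{U}\in\gamma(Y)$ iff $Y$ admits a finite cover by sets $A_i$ with $\mathcal{U}\in\rho(a,a^{\prime})$ for all $a,a^{\prime}\in A_i$ --- is the right one: it matches the paper's notion of bounded set, and it is exactly the definition under which the chain $\alpha(Y)\prec\gamma(Y)\prec 1\alpha(Y)$ holds (a ``diameter''-style definition requiring each $A_i$ to lie in a single element of $\mathcal{U}$ would break the second inequality, since the double-refinement relation here is only pairwise and does not force a whole star into one element of $\mathcal{U}$). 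Your handling of that very point is the substantive part of the argument and is done correctly: in both factor-one estimates you apply $2\mathcal{V}\leqslant\mathcal{U}$ only to pairs $V,V^{\prime}\in\mathcal{V}$ that visibly intersect (at the center $x_i$, or at a witnessing point of $V^{\prime}\cap\mathrm{St}[x,\mathcal{V}]$), which is all the definition licenses. The star-of-star lemma $\mathrm{cls}(\mathrm{St}[x,\mathcal{V}])\subset\mathrm{St}[x,\mathcal{U}]$ via the closure formula $\mathrm{cls}(A)=\bigcap_{\mathcal{W}}\mathrm{St}[A,\mathcal{W}]$ is sound, and items (2) and (3) are indeed immediate from reverse inclusion (you also silently corrected the typo in item (3), where $\alpha(Y)\cap\alpha(Y)$ should read $\alpha(Y)\cap\alpha(Z)$). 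One cosmetic remark: in item (1) and in the choice of centers $x_i\in A_i$, you should discard empty pieces of the cover, but this is harmless.
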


If $\mathrm{cls}\left( Y\right) $ is compact then $\alpha \left( Y\right) =%
\mathcal{O}$. The converse holds if $X$ is a complete admissible space (see 
\cite[Proposition 11]{Richard2}). The following theorem is proved in \cite%
{Richard2}. A decreasing net $\left( F_{\lambda }\right) $ of nonempty
closed sets of $X$ means that $F_{\lambda _{1}}\subset F_{\lambda _{2}}$
whenever $\lambda _{1}\geq \lambda _{2}$.

\begin{theorem}[Cantor--Kuratowski theorem]
\label{CantorKuratowski}The admissible space $X$ is complete if and only if
every decreasing net $\left( F_{\lambda }\right) $ of nonempty bounded
closed sets of $X$, with $\alpha \left( F_{\lambda }\right) \rightarrow 
\mathcal{O}$, has nonempty compact intersection.
\end{theorem}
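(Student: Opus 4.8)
\section*{Proof proposal}

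The statement is an equivalence, so the plan is to establish the two implications separately: the forward direction by extracting a convergent subnet from an arbitrary choice of points, and the converse by manufacturing a suitable decreasing net out of a given Cauchy net. Assume first that $X$ is complete and let $\left( F_{\lambda }\right)$ be a decreasing net of nonempty bounded closed sets with $\alpha \left( F_{\lambda }\right) \rightarrow \mathcal{O}$. Choosing $x_{\lambda }\in F_{\lambda }$ for each $\lambda$, I would pass to a \emph{universal} (ultra) subnet $\left( x_{\lambda _{\mu }}\right)$ and show it is Cauchy. Given $\mathcal{U}\in \mathcal{O}$, take $\mathcal{V}\leqslant \frac{1}{2}\mathcal{U}$; since $\alpha \left( F_{\lambda }\right) \rightarrow \mathcal{O}$, there is $\lambda _{0}$ with $\mathcal{V}\in \alpha \left( F_{\lambda _{0}}\right)$, so $F_{\lambda _{0}}\subset \bigcup_{i=1}^{n}\mathrm{St}\left[ y_{i},\mathcal{V}\right]$. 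The subnet is eventually in $F_{\lambda _{0}}$, hence eventually in this finite union, and the defining property of a universal subnet forces it to be eventually inside a single $\mathrm{St}\left[ y_{i},\mathcal{V}\right]$; the double-refinement $\mathcal{V}\leqslant \frac{1}{2}\mathcal{U}$ then puts any two late terms in a common member of $\mathcal{U}$, i.e. $\mathcal{U}\in \rho \left( x_{\lambda _{\mu }},x_{\lambda _{\nu }}\right)$. Thus the subnet is Cauchy, and by completeness it converges to some $x$. As the original net is eventually in each closed $F_{\lambda }$, so is the subnet, giving $x\in F_{\lambda }$ for every $\lambda$; hence $F:=\bigcap_{\lambda }F_{\lambda }\neq \emptyset$.

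For compactness of $F$, note that $F$ is closed and $F\subset F_{\lambda }$, so monotonicity of the measure of noncompactness (Proposition \ref{P9}, item 2) yields $\alpha \left( F\right) \prec \alpha \left( F_{\lambda }\right)$ for all $\lambda$. Combined with $\alpha \left( F_{\lambda }\right) \rightarrow \mathcal{O}$ this forces $\alpha \left( F\right) =\mathcal{O}$, since every $\mathcal{U}\in \mathcal{O}$ eventually belongs to $\alpha \left( F_{\lambda }\right) \subset \alpha \left( F\right)$. Because $X$ is complete, $\alpha \left( F\right) =\mathcal{O}$ implies $\mathrm{cls}\left( F\right)$ is compact (the stated converse of ``compact closure implies trivial measure''), and $F$ closed gives $F=\mathrm{cls}\left( F\right)$ compact.

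Conversely, assume every decreasing net of nonempty bounded closed sets with $\alpha \rightarrow \mathcal{O}$ has nonempty compact intersection, and let $\left( x_{\lambda }\right)$ be Cauchy. I would form the tail closures $F_{\lambda }=\mathrm{cls}\left\{ x_{\mu }:\mu \geq \lambda \right\}$, which are nonempty, closed, and decreasing in the sense of the theorem. To apply the hypothesis I must check boundedness and $\alpha \left( F_{\lambda }\right) \rightarrow \mathcal{O}$: the Cauchy condition makes some tail bounded, and Remark \ref{R2} transfers boundedness to its closure, so after restricting to a cofinal tail (which does not change $\bigcap F_{\lambda }$) all $F_{\lambda }$ are bounded; and for each $\mathcal{U}$, taking $\mathcal{V}\leqslant \frac{1}{2}\mathcal{U}$, the Cauchy condition places an entire tail inside a single star $\mathrm{St}\left[ x_{\lambda _{1}},\mathcal{V}\right]$, so $\mathcal{V}\in \alpha \left( \text{tail}\right)$ and Proposition \ref{P9}, item 4, promotes this to $\mathcal{U}\in \alpha \left( F_{\lambda }\right)$ for late $\lambda$. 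The hypothesis then gives $x\in \bigcap_{\lambda }F_{\lambda }$, so $x$ is a cluster point of the net; finally I would check, using the triangle estimate and the characterization of convergence in Proposition \ref{P1} (items 4 and 5) together with $\mathcal{V}\leqslant \frac{1}{2}\mathcal{U}$, that a Cauchy net with a cluster point converges to it, so $x_{\lambda }\rightarrow x$ and $X$ is complete.

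The main obstacle is the forward direction: converting the abstract decay $\alpha \left( F_{\lambda }\right) \rightarrow \mathcal{O}$ into a genuinely convergent subnet. In the metric setting one argues with sequences and a diagonal pigeonhole; in the net-and-covering setting the pigeonhole is supplied by the universal subnet, whose ``eventually in $A$ or in $A^{c}$'' dichotomy turns each finite $\mathcal{V}$-star cover into the Cauchy condition. The delicate point is ensuring that selecting one star at scale $\mathcal{V}$ controls $\mathcal{U}$-closeness, which is exactly what the double-refinement $\mathcal{V}\leqslant \frac{1}{2}\mathcal{U}$ provides; keeping this bookkeeping consistent across the two scales is where the argument must be handled with care.
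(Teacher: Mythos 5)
Your proposal is correct, but there is nothing in the paper to compare it against: the authors state Theorem \ref{CantorKuratowski} as a quoted result and defer its proof entirely to \cite{Richard2}, so any proof here is necessarily a reconstruction. Judged on its own terms, both of your implications go through with the tools the paper makes available. In the forward direction, the universal-subnet device is a legitimate replacement for the pigeonhole/diagonal argument of the metric case: a universal net eventually contained in a finite union of $\mathcal{V}$-stars is eventually in a single star, and the double refinement $\mathcal{V}\leqslant \frac{1}{2}\mathcal{U}$ upgrades this to the Cauchy condition $\mathcal{U}\in \rho \left( x_{\lambda _{\mu }},x_{\lambda _{\nu }}\right) $; completeness, closedness of the $F_{\lambda }$, and the fact that the chosen net is eventually in each $F_{\lambda }$ then give a point of the intersection, and monotonicity (Proposition \ref{P9}, item 2) forces $\alpha \left( \bigcap_{\lambda }F_{\lambda }\right) =\mathcal{O}$. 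In the converse, your use of Proposition \ref{P9}, item 4, to pass from $\mathcal{V}\in \alpha \left( \text{tail}\right) $ to $\mathcal{U}\in \alpha \left( \mathrm{cls}\left( \text{tail}\right) \right) $ is exactly the right mechanism, and the final step (a Cauchy net converges to any cluster point, via Proposition \ref{P1}, items 4 and 5, and a double refinement) is standard and sound. Two points deserve explicit care. First, the existence of universal subnets is a choice-theoretic fact nowhere mentioned in the paper; it is standard (see \cite{Will}), but you should cite it, since it is the engine of your forward direction. Second, your compactness step leans on the quoted converse of \textquotedblleft $\mathrm{cls}\left( Y\right) $ compact $\Rightarrow \alpha \left( Y\right) =\mathcal{O}$\textquotedblright\ (Proposition 11 of \cite{Richard2}), which lives in the same reference as the theorem itself, so you cannot rule out circularity from inside this paper; if you want to be safe, note instead that $\alpha \left( F\right) =\mathcal{O}$ says literally that $F$ is totally bounded in the sense of Definition \ref{Totallybounded}, that a closed subset of a complete space is complete, and invoke the quoted equivalence \textquotedblleft compact if and only if complete and totally bounded\textquotedblright\ for the subspace $F$.
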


\section{Limit behavior of semigroup actions}

This section contains basic definitions and properties of limit behavior of
semigroup actions on topological spaces. We refer to \cite{Stephanie} for
previous studies on dynamical concepts of semigroup actions. Throughout,
there is a fixed completely regular space $X$ endowed with and admissible of
open coverings $\mathcal{O}$.

Let $S$ be a topological semigroup. An \emph{action} of $S$ on $X$ is a
mapping

\begin{equation*}
\mu :%
\begin{array}[t]{ccc}
S\times X & \rightarrow & X \\ 
(s,x) & \mapsto & \mu (s,x)=sx%
\end{array}%
\end{equation*}%
satisfying $s\left( tx\right) =\left( st\right) x$ for all $x\in X$ and $%
s,t\in S$. We denote by $\mu _{s}:X\rightarrow X$ the map $\mu _{s}\left(
\cdot \right) =\mu \left( s,\cdot \right) $. The action is said to be \emph{%
open} if every $\mu _{s}$ is an open map; the action is called \emph{%
surjective} if every $\mu _{s}$ is surjective; and the action is said to be 
\emph{eventually compact} if there is some $t\in S$ such that $\mu
_{t}:X\rightarrow X$ is a compact map, that is, for every bounded set $%
Y\subset X$ the image $\mu _{t}\left( Y\right) $ is relatively compact. In
this paper we assume that $\mu _{s}$ is continuous for all $s\in S$. We
often indicate the semigroup action as $\left( S,X,\mu \right) $, or simply $%
\left( S,X\right) $.

For subsets $Y\subset X$ and $A\subset S$ we define the set $%
AY=\bigcup\limits_{s\in A}\mu _{s}\left( Y\right) $. We say that $Y$ is 
\emph{invariant} if $\mu _{s}\left( Y\right) =sY=Y$ for every $s\in S$.

For limit behavior of $\left( S,X\right) $, we assume a fixed filter basis $%
\mathcal{F}$ on the subsets of $S$ (that is, $\emptyset \notin \mathcal{F}$
and given $A,B\in \mathcal{F}$ there is $C\in \mathcal{F}$ with $C\subset
A\cap B$). The following notion of $\mathcal{F}$-divergent net was defined
in \cite{Stephanie}.

\begin{notation}
\label{Note}A net $\left( t_{\lambda }\right) _{\lambda \in \Lambda }$ in $S$
\textbf{diverges} on the direction of $\mathcal{F}$ ($\mathcal{F}$\textbf{%
-diverges}) if for each $A\in \mathcal{F}$ there is $\lambda _{0}\in \Lambda 
$ such that $t_{\lambda }\in A$ for all $\lambda \geq \lambda _{0}$. The
notation $t_{\lambda }\longrightarrow _{\mathcal{F}}\infty $ means that $%
\left( t_{\lambda }\right) $ $\mathcal{F}$-diverges.
\end{notation}

See \cite{Stephanie} for examples and general explanations on $\mathcal{F}$%
-divergent nets.

\begin{definition}
The $\omega $\textbf{-limit set} of $Y\subset X$ on the direction of $%
\mathcal{F}$ is defined as 
\begin{equation*}
\omega \left( Y,\mathcal{F}\right) =\bigcap_{A\in \mathcal{F}}\mathrm{cls}%
\left( AY\right) =\left\{ 
\begin{array}{c}
x\in X:\text{ there are nets }\left( t_{\lambda }\right) _{\lambda \in
\Lambda }\text{ in }S\text{ and }\left( x_{\lambda }\right) _{\lambda \in
\Lambda }\text{ in }Y \\ 
\text{such that }t_{\lambda }\longrightarrow _{\mathcal{F}}\infty \text{ and 
}t_{\lambda }x_{\lambda }\longrightarrow x%
\end{array}%
\right\}
\end{equation*}
\end{definition}

\begin{remark}
If $Y\subset X$ is invariant then $\omega \left( Y,\mathcal{F}\right) =%
\mathrm{cls}\left( Y\right) $. If $Y$ is also closed then $\omega \left( Y,%
\mathcal{F}\right) =Y$.
\end{remark}

\begin{definition}
\label{FirstProlong} For a given point $x\in X$, the \textbf{forward }$%
\mathcal{F}$-\textbf{prolongational limit set} of $x\in X$ is defined as%
\begin{equation*}
J\left( x,\mathcal{F}\right) =\left\{ 
\begin{array}{c}
y\in X:\text{there are nets }\left( t_{\lambda }\right) \text{ in }S\text{
and }\left( x_{\lambda }\right) \text{ in }X\text{ such that } \\ 
t_{\lambda }\longrightarrow _{\mathcal{F}}\infty \text{, }x_{\lambda
}\longrightarrow x\text{ and }t_{\lambda }x_{\lambda }\longrightarrow y%
\end{array}%
\right\} .
\end{equation*}
\end{definition}

The following lemma is proved in \cite[Proposition 2.14]{BRS}.

\begin{lemma}
\label{propwJ} If $K\subset X$ is compact and $U\subset X$ is open, one has 
\begin{equation*}
\begin{array}{lll}
\omega \left( K,\mathcal{F}\right) \subset J\left( K,\mathcal{F}\right) & 
\text{and} & J\left( U,\mathcal{F}\right) \subset \omega \left( U,\mathcal{F}%
\right) \text{.}%
\end{array}%
\end{equation*}
\end{lemma}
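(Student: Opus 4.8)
The plan is to establish the two inclusions separately, each one reducing to a single net manipulation. Throughout I use the net characterizations recorded in the definitions, together with the convention $J\left( Y,\mathcal{F}\right) =\bigcup_{y\in Y}J\left( y,\mathcal{F}\right) $ for a set $Y$. Thus $x\in \omega \left( Y,\mathcal{F}\right) $ exactly when there are nets $\left( t_{\lambda }\right) $ in $S$ and $\left( x_{\lambda }\right) $ in $Y$ with $t_{\lambda }\rightarrow _{\mathcal{F}}\infty $ and $t_{\lambda }x_{\lambda }\rightarrow x$, while $x\in J\left( Y,\mathcal{F}\right) $ exactly when there are nets $\left( t_{\lambda }\right) $ in $S$ and $\left( x_{\lambda }\right) $ in $X$ with $t_{\lambda }\rightarrow _{\mathcal{F}}\infty $, $x_{\lambda }\rightarrow y$ for some $y\in Y$, and $t_{\lambda }x_{\lambda }\rightarrow x$. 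The difference is that $\omega $ forces the starting net to sit inside the set but imposes no convergence on it, whereas $J$ forces the starting net to converge into the set but lets it roam through all of $X$.

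For the inclusion $\omega \left( K,\mathcal{F}\right) \subset J\left( K,\mathcal{F}\right) $ I would start from $x\in \omega \left( K,\mathcal{F}\right) $ together with witnessing nets $\left( t_{\lambda }\right) $ in $S$ and $\left( x_{\lambda }\right) $ in $K$. Since $K$ is compact, the net $\left( x_{\lambda }\right) $ has a cluster point $y\in K$, and hence a subnet $\left( x_{\lambda _{\mu }}\right) $ with $x_{\lambda _{\mu }}\rightarrow y$; this uses only the characterization of compactness by cluster points of nets, which is valid without assuming $X$ Hausdorff. Passing to the matching subnet of $\left( t_{\lambda }\right) $, I must check that $\mathcal{F}$-divergence survives the passage to a subnet (a short monotone-cofinal argument from Notation \ref{Note}) and that $t_{\lambda _{\mu }}x_{\lambda _{\mu }}\rightarrow x$ as a subnet of a convergent net. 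The resulting triple $t_{\lambda _{\mu }}\rightarrow _{\mathcal{F}}\infty $, $x_{\lambda _{\mu }}\rightarrow y\in K$, $t_{\lambda _{\mu }}x_{\lambda _{\mu }}\rightarrow x$ witnesses $x\in J\left( y,\mathcal{F}\right) \subset J\left( K,\mathcal{F}\right) $.

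For the inclusion $J\left( U,\mathcal{F}\right) \subset \omega \left( U,\mathcal{F}\right) $ I would start from $x\in J\left( U,\mathcal{F}\right) $, so $x\in J\left( u,\mathcal{F}\right) $ for some $u\in U$, with witnessing nets $\left( t_{\lambda }\right) $ in $S$ and $\left( x_{\lambda }\right) $ in $X$ satisfying $x_{\lambda }\rightarrow u$ and $t_{\lambda }x_{\lambda }\rightarrow x$. Here the hypothesis that $U$ is open does the work: since $x_{\lambda }\rightarrow u\in U$ and $U$ is open, there is $\lambda _{1}$ with $x_{\lambda }\in U$ for all $\lambda \geq \lambda _{1}$. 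Restricting all three nets to the tail $\left\{ \lambda :\lambda \geq \lambda _{1}\right\} $ yields a subnet whose starting points now lie in $U$, whose time net still $\mathcal{F}$-diverges, and which still satisfies $t_{\lambda }x_{\lambda }\rightarrow x$. This is precisely a witness for $x\in \omega \left( U,\mathcal{F}\right) $.

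I expect no serious obstacle: the genuine content is the complementary pairing ``compactness supplies a convergent subnet'' for the first inclusion and ``openness forces the net to enter the set eventually'' for the second. The only points demanding care are the bookkeeping facts that a subnet of an $\mathcal{F}$-divergent net is again $\mathcal{F}$-divergent and that a tail of a net is a legitimate subnet, both immediate from the directedness of the index set and Notation \ref{Note}.
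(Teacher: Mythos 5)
Your proof is correct. Note that the paper does not prove this lemma at all --- it simply cites \cite{BRS} (Proposition 2.14) --- so there is no internal proof to compare against; your argument (compactness of $K$ supplies a convergent subnet of the base points, giving $\omega \left( K,\mathcal{F}\right) \subset J\left( K,\mathcal{F}\right) $; openness of $U$ forces the approximating net eventually into $U$, giving $J\left( U,\mathcal{F}\right) \subset \omega \left( U,\mathcal{F}\right) $) is the standard one and is complete, including the bookkeeping that subnets and tails preserve $\mathcal{F}$-divergence and that no Hausdorff assumption is needed for the cluster-point characterization of compactness.
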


We might assume the following additional hypotheses on the filter basis $%
\mathcal{F}$.

\begin{definition}
\label{hipH} The family $\mathcal{F}$ is said to satisfy:

\begin{enumerate}
\item Hypothesis $\mathrm{H}_{1}$ if for all $s\in \mathcal{S}$ and $A\in 
\mathcal{F}$ there exists $B\in \mathcal{F}$ such that $sB\subset A$.

\item Hypothesis $\mathrm{H}_{2}$ if for all $s\in \mathcal{S}$ and $A\in 
\mathcal{F}$ there exists $B\in \mathcal{F}$ such that $Bs\subset A$.

\item Hypothesis $\mathrm{H}_{3}$ if for all $s\in \mathcal{S}$ and $A\in 
\mathcal{F}$ there exists $B\in \mathcal{F}$ such that $B\subset As$.

\item Hypothesis $\mathrm{H}_{4}$ if for all $s\in \mathcal{S}$ and $A\in 
\mathcal{F}$ there exists $B\in \mathcal{F}$ such that $B\subset sA$.
\end{enumerate}
\end{definition}

We refer to papers \cite{smj}, \cite{smj2}, \cite{smj3} for examples of
usual families satisfying these hypotheses.

\begin{remark}
\label{Forwardinv} Let $\mathcal{F}$ be a family of subsets of $S$
satisfying Hypothesis $\mathrm{H}_{1}$. Then $\omega \left( Y,\mathcal{F}%
\right) $ is forward invariant if it is nonempty (\cite[Proposition 2.10]%
{smj}).
\end{remark}

\begin{definition}
The subset $Y\subset X$ \textbf{attracts} the subset $Z\subset X$ on the
direction of the filter basis $\mathcal{F}$ if for each $\mathcal{U}\in 
\mathcal{O}$ there is $A\in \mathcal{F}$ such that $AZ\subset \mathrm{St}%
\left[ Y,\mathcal{U}\right] $; $Y$ \textbf{absorbs} $Z$ on the direction of $%
\mathcal{F}$ if there is $A\in \mathcal{F}$ such that $AZ\subset Y$.
\end{definition}

We can describe attraction by means of $\mathcal{F}$-divergent nets.

\begin{proposition}
\label{P3}The subset $Y\subset X$ attracts the subset $Z\subset X$ on the
direction of the filter basis $\mathcal{F}$ if and only if $\rho _{Y}\left(
t_{\lambda }Z\right) \rightarrow \mathcal{O}$ for every net $t_{\lambda
}\rightarrow _{\mathcal{F}}\infty $.
\end{proposition}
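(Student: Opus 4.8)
The plan is to reduce both sides of the equivalence to a single combinatorial statement about stars, and then compare them directly. The crux is the following reformulation, which I would establish first: for a fixed $\mathcal{U}\in\mathcal{O}$ and any net $\left(t_{\lambda}\right)$,
\[
\mathcal{U}\in\rho_{Y}\left(t_{\lambda}Z\right)\quad\Longleftrightarrow\quad t_{\lambda}Z\subset\mathrm{St}\left[Y,\mathcal{U}\right].
\]
To prove this I would unwind $\rho_{Y}\left(t_{\lambda}Z\right)=\bigcap_{b\in t_{\lambda}Z}\bigcup_{y\in Y}\rho\left(b,y\right)$, so that $\mathcal{U}\in\rho_{Y}\left(t_{\lambda}Z\right)$ says exactly that every $b\in t_{\lambda}Z$ admits some $y\in Y$ with $\mathcal{U}\in\rho\left(b,y\right)$. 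Using the symmetry $\rho\left(b,y\right)=\rho\left(y,b\right)$ from Proposition \ref{P1}(1) together with the definition of $\rho$, the condition $\mathcal{U}\in\rho\left(b,y\right)$ means $b$ and $y$ lie together in a common member of $\mathcal{U}$, i.e. $b\in\mathrm{St}\left[y,\mathcal{U}\right]\subset\mathrm{St}\left[Y,\mathcal{U}\right]$. Since $\mathrm{St}\left[Y,\mathcal{U}\right]=\bigcup_{y\in Y}\mathrm{St}\left[y,\mathcal{U}\right]$, this is precisely $b\in\mathrm{St}\left[Y,\mathcal{U}\right]$, and ranging over all $b$ gives the displayed equivalence.

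With this in hand, both conditions become transparent. Attraction of $Z$ by $Y$ says that for each $\mathcal{U}\in\mathcal{O}$ there is $A\in\mathcal{F}$ with $AZ\subset\mathrm{St}\left[Y,\mathcal{U}\right]$, while $\rho_{Y}\left(t_{\lambda}Z\right)\rightarrow\mathcal{O}$ says (via the reformulation and Definition \ref{Convergence}) that for each $\mathcal{U}\in\mathcal{O}$ there is $\lambda_{0}$ with $t_{\lambda}Z\subset\mathrm{St}\left[Y,\mathcal{U}\right]$ for all $\lambda\geq\lambda_{0}$. For the forward implication I would assume attraction, fix any $\mathcal{F}$-divergent net $t_{\lambda}\rightarrow_{\mathcal{F}}\infty$ and any $\mathcal{U}\in\mathcal{O}$, take $A\in\mathcal{F}$ with $AZ\subset\mathrm{St}\left[Y,\mathcal{U}\right]$, and use Notation \ref{Note} to find $\lambda_{0}$ with $t_{\lambda}\in A$ for $\lambda\geq\lambda_{0}$; then $t_{\lambda}Z\subset AZ\subset\mathrm{St}\left[Y,\mathcal{U}\right]$, so $\mathcal{U}\in\rho_{Y}\left(t_{\lambda}Z\right)$ eventually, giving convergence to $\mathcal{O}$.

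For the converse I would argue by contraposition. If $Y$ does not attract $Z$, there is a fixed $\mathcal{U}\in\mathcal{O}$ such that $AZ\not\subset\mathrm{St}\left[Y,\mathcal{U}\right]$ for every $A\in\mathcal{F}$; hence for each such $A$ I can choose $t_{A}\in A$ and $z_{A}\in Z$ with $t_{A}z_{A}\notin\mathrm{St}\left[Y,\mathcal{U}\right]$, so that $t_{A}Z\not\subset\mathrm{St}\left[Y,\mathcal{U}\right]$ and thus $\mathcal{U}\notin\rho_{Y}\left(t_{A}Z\right)$. Indexing by $\mathcal{F}$ directed by reverse inclusion (which is directed precisely because $\mathcal{F}$ is a filter basis) yields a net $\left(t_{A}\right)_{A\in\mathcal{F}}$ that $\mathcal{F}$-diverges: given $A_{0}\in\mathcal{F}$, every $A\geq A_{0}$ satisfies $A\subset A_{0}$, whence $t_{A}\in A\subset A_{0}$. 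Since $\mathcal{U}\notin\rho_{Y}\left(t_{A}Z\right)$ for all $A$, the net $\rho_{Y}\left(t_{A}Z\right)$ fails to converge to $\mathcal{O}$, completing the contrapositive.

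The main obstacle is really the reformulation step: one must handle the interplay of the nested $\bigcap_{b}\bigcup_{y}$ defining $\rho_{Y}$ together with the symmetry of $\rho$ carefully, since it is the identity that converts the set-theoretic inclusion $t_{\lambda}Z\subset\mathrm{St}\left[Y,\mathcal{U}\right]$ into membership $\mathcal{U}\in\rho_{Y}\left(t_{\lambda}Z\right)$. After that, the only subtlety is verifying that the counterexample net is genuinely $\mathcal{F}$-divergent, which is exactly where the filter-basis property of $\mathcal{F}$ enters to make $\mathcal{F}$, ordered by reverse inclusion, a directed index set.
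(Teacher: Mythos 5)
Your proposal is correct and follows essentially the same route as the paper's proof: the forward direction is identical, and your contrapositive for the converse is just the paper's contradiction argument with the same net $\left(t_{A}\right)_{A\in\mathcal{F}}$ indexed by reverse inclusion. The only difference is that you isolate explicitly the equivalence $\mathcal{U}\in\rho_{Y}\left(t_{\lambda}Z\right)\Leftrightarrow t_{\lambda}Z\subset\mathrm{St}\left[Y,\mathcal{U}\right]$, which the paper uses implicitly; making it explicit is a harmless (and arguably clarifying) addition.
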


\begin{proof}
Suppose that $Y$ attracts $Z$ and take a net $t_{\lambda }\rightarrow _{%
\mathcal{F}}\infty $. For a given $\mathcal{U}\in \mathcal{O}$ there is $%
A\in \mathcal{F}$ such that $AZ\subset \mathrm{St}\left[ Y,\mathcal{U}\right]
$. For this $A$, there is $\lambda _{0}$ such that $\lambda \geq \lambda
_{0} $ implies $t_{\lambda }\in A$, hence $t_{\lambda }Z\subset \mathrm{St}%
\left[ Y,\mathcal{U}\right] $ for all $\lambda \geq \lambda _{0}$, that is, $%
\mathcal{U}\in \rho _{Y}\left( t_{\lambda }Z\right) $ for all $\lambda \geq
\lambda _{0}$. Therefore $\rho _{Y}\left( t_{\lambda }Z\right) \rightarrow 
\mathcal{O}$. As to the converse, assume that $\rho _{Y}\left( t_{\lambda
}Z\right) \rightarrow \mathcal{O}$ for every net $t_{\lambda }\rightarrow _{%
\mathcal{F}}\infty $ and suppose by contradiction that $Y$ does not attracts 
$Z$. Then there is some $\mathcal{U}\in \mathcal{O}$ such that $AZ\nsubseteq 
\mathrm{St}\left[ Y,\mathcal{U}\right] $ for all $A\in \mathcal{F}$. Hence,
for each $A\in \mathcal{F}$, there is $t_{A}\in A$ such that $%
t_{A}Z\nsubseteq \mathrm{St}\left[ Y,\mathcal{U}\right] $. As $%
t_{A}\rightarrow _{\mathcal{F}}\infty $, we have $\rho _{Y}\left(
t_{A}Z\right) \rightarrow \mathcal{O}$, by hypothesis. This means there is $%
A_{0}$ such that $A\subset A_{0}$ implies $\mathcal{U}\in \rho _{Y}\left(
t_{A}Z\right) $, and therefore $t_{A}Z\subset \mathrm{St}\left[ Y,\mathcal{U}%
\right] $ for all $A\subset A_{0}$, which is a contradiction.
\end{proof}

We now introduce the notions of eventually compact, eventually bounded,
bounded dissipative, asymptotically compact, and $\omega $-limit compact
semigroup actions. These concepts were introduced in \cite{Stephanie}.

\begin{definition}
The semigroup action $\left( S,X\right) $ is called:

\begin{enumerate}
\item $\mathcal{F}$\textbf{-eventually bounded} if for each bounded set $%
Y\subset X$ there is $A\in \mathcal{F}$ such that $AY$ is a bounded set of $%
X $.

\item $\mathcal{F}$\textbf{-bounded dissipative} if there is a bounded
subset $D$ of $X$ that absorbs every bounded subset of $X$.

\item $\mathcal{F}$\textbf{-point dissipative} if there is a bounded subset $%
D\subset X$ that absorbs every point $x\in X$.

\item $\mathcal{F}$\textbf{-asymptotically compact} if for all bounded net $%
\left( x_{\lambda }\right) $ in $X$ and all net $\left( t_{\lambda }\right) $
in $S$ with $t_{\lambda }\longrightarrow _{\mathcal{F}}\infty $, the net $%
\left( t_{\lambda }x_{\lambda }\right) $ has convergent subnet.

\item $\mathcal{F}$\textbf{-limit compact} if for every bounded set $%
B\subset X$ and any $\mathcal{U}\in \mathcal{O}$ there is $A\in \mathcal{F}$
such that $\mathcal{U}\in \beta \left( AB\right) $.
\end{enumerate}
\end{definition}

We can characterize bounded dissipative semigroup actions as follows.

\begin{proposition}
\label{BoundedDissip} The semigroup action $\left( S,X\right) $ is $\mathcal{%
F}$-bounded dissipative if and only if there is a bounded subset $D\subset X$
which attracts every bounded subset of $X$ with respect to the family $%
\mathcal{F}$.
\end{proposition}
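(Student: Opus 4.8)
The plan is to prove the two implications separately, exploiting the fact that absorption is the exact analogue of attraction in which the enclosing star is replaced by the set itself.

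For the forward implication (bounded dissipative $\Rightarrow$ attracting) I would start from a bounded set $D$ that absorbs every bounded subset and show that the same $D$ attracts every bounded subset. The key observation is that $D \subset \mathrm{St}\left[D,\mathcal{U}\right]$ for every $\mathcal{U}\in\mathcal{O}$: each point of $D$ lies in some member of the covering $\mathcal{U}$, and that member meets $D$, hence is contained in $\mathrm{St}\left[D,\mathcal{U}\right]$. Consequently, given a bounded $Z$, the set $A\in\mathcal{F}$ witnessing absorption, $AZ\subset D$, automatically satisfies $AZ\subset\mathrm{St}\left[D,\mathcal{U}\right]$ for every $\mathcal{U}$, which is precisely the attraction condition. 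This direction is essentially immediate.

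For the converse (attracting $\Rightarrow$ bounded dissipative) I would construct the absorbing set from the attracting one by thickening it once. Fix an arbitrary $\mathcal{U}_{0}\in\mathcal{O}$ and set $D^{\prime}=\mathrm{St}\left[D,\mathcal{U}_{0}\right]$, where $D$ is the bounded attracting set. By Remark \ref{R2}, $D^{\prime}$ is again bounded. Then for each bounded $Z$ I would apply the attraction hypothesis at the single covering $\mathcal{U}_{0}$: there is $A\in\mathcal{F}$ with $AZ\subset\mathrm{St}\left[D,\mathcal{U}_{0}\right]=D^{\prime}$, which says exactly that $D^{\prime}$ absorbs $Z$. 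Since $\mathcal{U}_{0}$, and hence $D^{\prime}$, are chosen once and for all independently of $Z$, the single bounded set $D^{\prime}$ absorbs every bounded subset, giving bounded dissipativeness.

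The only real subtlety — which I would flag as the main point rather than a genuine obstacle — is recognizing that attraction is quantified over all $\mathcal{U}\in\mathcal{O}$, so specializing it to one fixed $\mathcal{U}_{0}$ upgrades an approximate containment (into a star) to an exact absorption (into a fixed bounded set). The boundedness of the thickened set $D^{\prime}$ is supplied by Remark \ref{R2}, and neither direction requires completeness, eventual compactness, or any asymptotic-compactness hypothesis.
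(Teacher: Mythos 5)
Your proposal is correct and follows essentially the same route as the paper: the forward direction uses $AZ\subset D\subset \mathrm{St}\left[ D,\mathcal{U}\right] $ for every $\mathcal{U}$, and the converse thickens the attracting set once to $\mathrm{St}\left[ D,\mathcal{U}_{0}\right] $ and specializes attraction to that single covering. Your explicit appeal to Remark \ref{R2} for the boundedness of the thickened set is a point the paper leaves implicit, but otherwise the two arguments coincide.
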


\begin{proof}
Suppose that $\left( S,X\right) $ is $\mathcal{F}$-bounded dissipative. Then
there is a bounded set $D\subset X$ that absorbs every bounded subset of $X$%
. Let $Y$ be a bounded subset of $X$. Then there is $A\in \mathcal{F}$ such
that $AY\subset D$. Hence, $AY\subset \mathrm{St}\left[ D,\mathcal{U}\right] 
$ for every $\mathcal{U}\in \mathcal{O}$, and therefore $D$ attracts $Y$. As
to the converse, suppose the existence of a bounded set $Y\subset X$ which
attracts every bounded subset of $X$. Choose $\mathcal{U}\in \mathcal{O}$
and define $D=\mathrm{St}\left[ Y,\mathcal{U}\right] $. Given a bounded
subset $Z$ of $X$, there is $A\in \mathcal{F}$ such that $AZ\subset \mathrm{%
St}\left[ Y,\mathcal{U}\right] =D$, that is, $D$ absorbs $Z$. Therefore, the
semigroup action is $\mathcal{F}$-bounded dissipative.
\end{proof}

We have the following relation among eventually compact, eventually bounded,
and asymptotically compact semigroup actions.

\begin{proposition}
\label{1.1} Assume that $\mathcal{F}$ satisfies Hypothesis $\mathrm{H}_{4}$.
If the semigroup action $\left( S,X\right) $ is both eventually compact and $%
\mathcal{F}$-eventually bounded then $\left( S,X\right) $ is $\mathcal{F}$%
-asymptotically compact.
\end{proposition}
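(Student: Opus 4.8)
The plan is to exploit the single compact map $\mu_t$ furnished by eventual compactness, using Hypothesis $\mathrm{H}_{4}$ to strip a factor of $t$ off the divergent net so that the remaining images land in a fixed bounded set on which $\mu_t$ has relatively compact image. First I would take an arbitrary bounded net $\left( x_{\lambda }\right)$ in $X$ and a net $\left( t_{\lambda }\right)$ in $S$ with $t_{\lambda }\longrightarrow _{\mathcal{F}}\infty$, and set $Y=\left\{ x_{\lambda }:\lambda \in \Lambda \right\}$, which is bounded. Since the action is $\mathcal{F}$-eventually bounded, there is $A\in \mathcal{F}$ such that $AY$ is a bounded subset of $X$.

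Next I would fix $t\in S$ with $\mu_t$ compact (eventual compactness) and apply Hypothesis $\mathrm{H}_{4}$ to the pair $t$ and $A$, producing $B\in \mathcal{F}$ with $B\subset tA$. As $\left( t_{\lambda }\right)$ is $\mathcal{F}$-divergent, there is $\lambda _{0}$ with $t_{\lambda }\in B\subset tA$ for all $\lambda \geq \lambda _{0}$; for each such $\lambda$ this gives a factorization $t_{\lambda }=t\,a_{\lambda }$ with $a_{\lambda }\in A$.

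The conclusion then follows quickly. For $\lambda \geq \lambda _{0}$ one has
\[
t_{\lambda }x_{\lambda }=t\left( a_{\lambda }x_{\lambda }\right) =\mu_t\left( a_{\lambda }x_{\lambda }\right) ,
\]
where $a_{\lambda }x_{\lambda }=\mu _{a_{\lambda }}\left( x_{\lambda }\right) \in AY$. Hence the tail $\left\{ t_{\lambda }x_{\lambda }:\lambda \geq \lambda _{0}\right\}$ lies inside $\mu_t\left( AY\right)$, which is relatively compact because $AY$ is bounded and $\mu_t$ is a compact map. This tail is a subnet of $\left( t_{\lambda }x_{\lambda }\right)$ contained in the compact set $\mathrm{cls}\left( \mu_t\left( AY\right) \right)$, so it admits a convergent subnet, which is also a convergent subnet of the original net. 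Therefore $\left( S,X\right)$ is $\mathcal{F}$-asymptotically compact.

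The only delicate step is the factorization: the purpose of $\mathrm{H}_{4}$ is precisely to ensure that an $\mathcal{F}$-divergent net can eventually be written as $t\,a_{\lambda }$ with the free factor $a_{\lambda }$ confined to a prescribed $A\in \mathcal{F}$, which is exactly what places each $a_{\lambda }x_{\lambda }$ in the bounded set $AY$. Everything afterward — relative compactness of $\mu_t\left( AY\right)$ and the extraction of a convergent subnet from a net in a compact set — is routine.
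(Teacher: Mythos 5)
Your argument is correct and follows essentially the same route as the paper's proof: bound $AY$ via eventual boundedness, use Hypothesis $\mathrm{H}_{4}$ to write the tail of $\left( t_{\lambda }\right) $ as $t\,a_{\lambda }$ with $a_{\lambda }\in A$, and conclude that the tail of $\left( t_{\lambda }x_{\lambda }\right) $ lies in the relatively compact set $\mu _{t}\left( AY\right) $. No gaps.
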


\begin{proof}
Let $\left( x_{\lambda }\right) _{\lambda \in \Lambda }$ be a bounded net in 
$X$ and $\left( t_{\lambda }\right) _{\lambda \in \Lambda }$ a net in $S$
with $t_{\lambda }\longrightarrow _{\mathcal{F}}\infty $. Since $\left(
S,X\right) $ is $\mathcal{F}$-eventually bounded and the set $Y=\left\{
x_{\lambda };\text{ }\lambda \in \Lambda \right\} $ is bounded, there is $%
A\in \mathcal{F}$ such that $AY$ is bounded. As $\left( S,X\right) $ is
eventually compact, there is $t\in S$ such that $\mu _{t}:X\rightarrow X$ is
a compact map. Hypothesis $\mathrm{H}_{4}$ yields the existence of $B\in 
\mathcal{F}$ such that $B\subset tA$. Since $t_{\lambda }\longrightarrow _{%
\mathcal{F}}\infty $, there is $\lambda _{0}\in \Lambda $ such that $%
t_{\lambda }\in B$ for all $\lambda \geq \lambda _{0}$. Hence, for each $%
\lambda \geq \lambda _{0}$, we can write $t_{\lambda }=ts_{\lambda }$, with $%
s_{\lambda }\in A$. Now, define $Z=\left\{ s_{\lambda }x_{\lambda };\text{ }%
\lambda \geq \lambda _{0}\right\} $. As $Z\subset AY$, it follows that $Z$
is bounded. Then the set%
\begin{equation*}
\mu _{t}\left( Z\right) =\left\{ ts_{\lambda }x_{\lambda };\text{ }\lambda
\geq \lambda _{0}\right\} =\left\{ t_{\lambda }x_{\lambda };\text{ }\lambda
\geq \lambda _{0}\right\}
\end{equation*}%
is relatively compact, which implies the net $\left( t_{\lambda }x_{\lambda
}\right) $ has convergent subnet. Therefore $\left( S,X\right) $ is $%
\mathcal{F}$-asymptotically compact.
\end{proof}

Asymptotically compact semigroup actions have relevant properties, as stated
in the following sequence of theorems.

\begin{proposition}
\label{limitset2} Assume that the semigroup action $\left( S,X\right) $ is $%
\mathcal{F}$-asymptotically compact, and let $Y\subset X$ be a nonempty
bounded subset. Then the limit set $\omega \left( Y,\mathcal{F}\right) $ is
nonempty, compact, and attracts $Y$. Furthermore, $\omega \left( Y,\mathcal{F%
}\right) $ is the smallest closed subset of $X$ that attracts $Y$.
\end{proposition}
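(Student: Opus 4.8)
The plan is to establish the three properties of $\omega(Y,\mathcal{F})$ — nonemptiness/compactness, attraction, and minimality — in that order, exploiting $\mathcal{F}$-asymptotic compactness to produce convergent subnets and the characterization of attraction from Proposition~\ref{P3}.

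First I would prove nonemptiness and compactness. For nonemptiness, pick any $\mathcal{F}$-divergent net $(t_\lambda)$ in $S$ (such nets exist since $\mathcal{F}$ is a filter basis) and any point $x_0\in Y$; the constant net $x_\lambda=x_0$ is bounded, so by $\mathcal{F}$-asymptotic compactness $(t_\lambda x_\lambda)$ has a convergent subnet, whose limit lies in $\omega(Y,\mathcal{F})$ by the net-description in the definition of the $\omega$-limit set. For compactness, since $\omega(Y,\mathcal{F})=\bigcap_{A\in\mathcal{F}}\mathrm{cls}(AY)$ is closed (an intersection of closed sets), it suffices to show it is totally bounded and then invoke completeness on a suitable subspace — but more directly I would argue by nets: take any net $(y_\beta)$ in $\omega(Y,\mathcal{F})$, realize each $y_\beta$ as a limit of $t_{\lambda}^{\beta}x_{\lambda}^{\beta}$, and extract via a diagonal argument a single $\mathcal{F}$-divergent net $(t_\gamma)$ with bounded $(x_\gamma)$ in $Y$ such that $t_\gamma x_\gamma$ approximates the $y_\beta$; $\mathcal{F}$-asymptotic compactness then forces a convergent subnet, whose limit lies in $\omega(Y,\mathcal{F})$, giving sequential/net compactness.

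Next I would prove that $\omega(Y,\mathcal{F})$ attracts $Y$. By Proposition~\ref{P3}, this is equivalent to showing $\rho_{\omega(Y,\mathcal{F})}(t_\lambda Y)\to\mathcal{O}$ for every $\mathcal{F}$-divergent net $(t_\lambda)$. I would argue by contradiction: if attraction fails, there is some $\mathcal{U}\in\mathcal{O}$ and, for each $A\in\mathcal{F}$, a point $t_A y_A$ with $t_A\in A$, $y_A\in Y$, lying outside $\mathrm{St}[\omega(Y,\mathcal{F}),\mathcal{U}]$. The net $(t_A)$ is $\mathcal{F}$-divergent and $(y_A)$ is bounded, so $(t_A y_A)$ has a convergent subnet with limit $z$; but $z\in\omega(Y,\mathcal{F})$ by definition, contradicting that these points stay outside the star of $\omega(Y,\mathcal{F})$. \textbf{This is the step I expect to be the main obstacle}, since translating the failure of attraction into a clean net statement requires care with the star $\mathrm{St}[\cdot,\mathcal{U}]$ being open and with the interplay between the admissible structure $\rho$ and ordinary convergence (Proposition~\ref{R5} is the right bridge here).

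Finally, for minimality, suppose $F$ is any closed set attracting $Y$; I would show $\omega(Y,\mathcal{F})\subset F$. Take $z\in\omega(Y,\mathcal{F})$, so $z=\lim t_\lambda x_\lambda$ for some $\mathcal{F}$-divergent $(t_\lambda)$ and $(x_\lambda)$ in $Y$. Since $F$ attracts $Y$, Proposition~\ref{P3} gives $\rho_F(t_\lambda Y)\to\mathcal{O}$, so in particular the points $t_\lambda x_\lambda$ are eventually inside every star $\mathrm{St}[F,\mathcal{U}]$; passing to the limit and using that $F$ is closed (so $F=\bigcap_{\mathcal{U}\in\mathcal{O}}\mathrm{St}[F,\mathcal{U}]$ by the Remark following Definition~\ref{Admiss}) forces $z\in F$. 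This yields $\omega(Y,\mathcal{F})\subset F$, establishing it as the smallest closed attracting set.
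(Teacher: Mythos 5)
Your proposal is correct and follows essentially the same route as the paper's proof: nonemptiness and compactness via $\mathcal{F}$-asymptotic compactness together with a diagonal argument over a product index set, attraction by contradiction (extracting a convergent subnet from points outside $\mathrm{St}\left[ \omega \left( Y,\mathcal{F}\right) ,\mathcal{U}\right] $ and using Proposition \ref{R5}), and minimality via Propositions \ref{P3} and \ref{R5}. The step you flag as the main obstacle is handled in the paper exactly as you anticipate.
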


\begin{proof}
By asymptotic compactness, every net $\left( t_{\lambda }y_{\lambda }\right) 
$, with $t_{\lambda }\rightarrow _{\mathcal{F}}\infty $ and $y_{\lambda }\in
Y$, has convergent subnet. Hence $\omega \left( Y,\mathcal{F}\right) $ is
nonempty. To show that $\omega \left( Y,\mathcal{F}\right) $ is compact,
take an arbitrary net $\left( x_{\lambda }\right) _{\lambda \in \Lambda }$
in $\omega \left( Y,\mathcal{F}\right) $. For each $\lambda \in \Lambda $,
there is a net $\left( t_{\left( A,\mathcal{U}\right) }^{\lambda }y_{\left(
A,\mathcal{U}\right) }^{\lambda }\right) _{\left( A,\mathcal{U}\right) \in 
\mathcal{F}\times \mathcal{O}}$ such that $t_{\left( A,\mathcal{U}\right)
}^{\lambda }\in A$ for every $A\in \mathcal{F}$, $y_{\left( A,\mathcal{U}%
\right) }^{\lambda }\in Y$, and $t_{\left( A,\mathcal{U}\right) }^{\lambda
}y_{\left( A,\mathcal{U}\right) }^{\lambda }\rightarrow x_{\lambda }$.
Denote $\sigma =\left( A,\mathcal{U}\right) $ and $\Sigma =\mathcal{F}\times 
\mathcal{O}$. Consider $\Lambda \times \Sigma $ directed by the product
direction. Fix $\lambda _{0}\in \Lambda $ and $\mathcal{U}_{0}\in \mathcal{O}
$. For a given $A\in \mathcal{F}$, $\left( \lambda ,\left( B,\mathcal{U}%
\right) \right) \geqslant \left( \lambda _{0},\left( B,\mathcal{U}%
_{0}\right) \right) $ means $B\subset A$, and then $t_{\left( B,\mathcal{U}%
\right) }^{\lambda }\in B\subset A$. Hence the net $\left( t_{\sigma
}^{\lambda }\right) _{\left( \lambda ,\sigma \right) \in \Lambda \times
\Sigma }$ diverges with respect to $\mathcal{F}$, that is, $t_{\sigma
}^{\lambda }\rightarrow _{\mathcal{F}}\infty $. Since $y_{\sigma }^{\lambda
}\in Y$, it follows that the net $\left( t_{\sigma }^{\lambda }y_{\sigma
}^{\lambda }\right) _{\left( \lambda ,\sigma \right) \in \Lambda \times
\Sigma }$ admits a convergent subnet. Thus we may assume that $\left(
t_{\sigma }^{\lambda }y_{\sigma }^{\lambda }\right) $ converges to some
point $x\in X$. This means that $x\in \omega \left( Y,\mathcal{F}\right) $.
We claim that a subnet of $\left( x_{\lambda }\right) _{\lambda \in \Lambda }
$ converges to $x$. In fact, for a given $\mathcal{U}\in \mathcal{O}$, take
a double-refinement $\mathcal{V}\in \mathcal{O}$ of $\mathcal{U}$. There is $%
\left( \lambda _{0},\sigma _{0}\right) $ such that $\left( \lambda ,\sigma
\right) \geqslant \left( \lambda _{0},\sigma _{0}\right) $ implies $%
t_{\sigma }^{\lambda }y_{\sigma }^{\lambda }\in \mathrm{St}\left[ x,\mathcal{%
V}\right] $. For each $\lambda \in \Lambda $, we can get $\lambda _{\mathcal{%
U}}\in \Lambda $ such that $\lambda _{\mathcal{U}}\geq \lambda $ and $%
\lambda _{\mathcal{U}}\geq \lambda _{0}$. For this $\lambda _{\mathcal{U}}$
we can take $\sigma _{\mathcal{U}}\in \Sigma $ such that $\sigma _{\mathcal{U%
}}\geq \sigma _{0}$ and $t_{\sigma _{\mathcal{U}}}^{\lambda _{\mathcal{U}%
}}y_{\sigma _{\mathcal{U}}}^{\lambda _{\mathcal{U}}}\in \mathrm{St}\left[
x_{\lambda _{\mathcal{U}}},\mathcal{V}\right] $. As $\left( \lambda _{%
\mathcal{U}},\sigma _{\mathcal{U}}\right) \geqslant \left( \lambda
_{0},\sigma _{0}\right) $, it follows that $t_{\sigma _{\mathcal{U}%
}}^{\lambda _{\mathcal{U}}}y_{\sigma _{\mathcal{U}}}^{\lambda _{\mathcal{U}%
}}\in \mathrm{St}\left[ x_{\lambda _{\mathcal{U}}},\mathcal{V}\right] \cap 
\mathrm{St}\left[ x,\mathcal{V}\right] $, and then $x_{\lambda _{\mathcal{U}%
}}\in \mathrm{St}\left[ x,\mathcal{U}\right] $, because $\mathcal{V}%
\leqslant \frac{1}{2}\mathcal{U}$. Hence the subnet $\left( x_{\lambda _{%
\mathcal{U}}}\right) _{\mathcal{U}\in \mathcal{O}}$ of $\left( x_{\lambda
}\right) $ converges to $x$, and therefore $\omega \left( Y,\mathcal{F}%
\right) $ is compact. We now show that $\omega \left( Y,\mathcal{F}\right) $
attracts $Y$. Suppose by contradiction that there is some $\mathcal{U}\in 
\mathcal{O}$ such that $AY\nsubseteq \mathrm{St}\left[ \omega \left( Y,%
\mathcal{F}\right) ,\mathcal{U}\right] $ for all $A\in \mathcal{F}$. Then,
for each $A\in \mathcal{F}$, there is $t_{A}\in A$ and $y_{A}\in Y$ such
that $t_{A}y_{A}\in X\setminus \mathrm{St}\left[ \omega \left( Y,\mathcal{F}%
\right) ,\mathcal{U}\right] $. As $t_{A}\longrightarrow _{\mathcal{F}}\infty 
$, the net $\left( t_{A}y_{A}\right) $ has a subnet $\left( t_{A_{\lambda
}}y_{A_{\lambda }}\right) $ that converges to some point $x\in X$. Hence $%
x\in \omega \left( Y,\mathcal{F}\right) $. By Proposition \ref{R5}, we have $%
\rho _{\omega \left( Y,\mathcal{F}\right) }\left( t_{A_{\lambda
}}y_{A_{\lambda }}\right) \rightarrow \mathcal{O}$, hence there is $\lambda
_{0}$ such that $\lambda \geq \lambda _{0}$ implies $\mathcal{U}\in \rho
_{\omega \left( Y,\mathcal{F}\right) }\left( t_{A_{\lambda }}y_{A_{\lambda
}}\right) $. Therefore $t_{A_{\lambda }}y_{A_{\lambda }}\in \mathrm{St}\left[
\omega \left( Y,\mathcal{F}\right) ,\mathcal{U}\right] $ for all $\lambda
\geq \lambda _{0}$, which is a contradiction. Thus $\omega \left( Y,\mathcal{%
F}\right) $ attracts $Y$. Finally, we show that $\omega \left( Y,\mathcal{F}%
\right) $ is the smallest closed subset of $X$ that attracts $Y$. Let $%
K\subset X$ be a closed set that attracts $Y$. Take $x\in \omega \left( Y,%
\mathcal{F}\right) $ and a net $t_{\lambda }y_{\lambda }\rightarrow x$ with $%
t_{\lambda }\rightarrow _{\mathcal{F}}\infty $ and $y_{\lambda }\in Y$. By
Proposition \ref{P3}, we have $\rho _{K}\left( t_{\lambda }Y\right)
\rightarrow \mathcal{O}$. As $\rho _{K}\left( t_{\lambda }y_{\lambda
}\right) \prec \rho _{K}\left( t_{\lambda }Y\right) $ for all $\lambda $, it
follows that $\rho _{K}\left( t_{\lambda }y_{\lambda }\right) \rightarrow 
\mathcal{O}$. By Proposition \ref{R5}, $x\in K$, and therefore $\omega
\left( Y,\mathcal{F}\right) \subset K$.
\end{proof}

\begin{proposition}
Assume that $X$ is Hausdorff, the semigroup action $\left( S,X\right) $ is $%
\mathcal{F}$-asymptotically compact and $\mathcal{F}$ satisfies both
Hypotheses $\mathrm{H}_{1}$ and $\mathrm{H}_{4}$. Then the limit set $\omega
\left( Y,\mathcal{F}\right) $ is invariant for every bounded subset $Y$ of $%
X $.
\end{proposition}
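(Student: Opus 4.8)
The plan is to obtain invariance as the conjunction of the forward inclusion $s\,\omega(Y,\mathcal{F})\subset\omega(Y,\mathcal{F})$ and the backward inclusion $\omega(Y,\mathcal{F})\subset s\,\omega(Y,\mathcal{F})$, for every $s\in S$. The forward inclusion comes essentially for free: by Proposition \ref{limitset2} the set $\omega(Y,\mathcal{F})$ is nonempty and compact (here the asymptotic compactness and the boundedness of $Y$ are used), and then Remark \ref{Forwardinv}, which invokes Hypothesis $\mathrm{H}_{1}$, gives that a nonempty $\omega(Y,\mathcal{F})$ is forward invariant. Thus all the work goes into the reverse inclusion, and this is where Hypothesis $\mathrm{H}_{4}$, asymptotic compactness, and the Hausdorff assumption will be combined.

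For the backward inclusion, I would fix $s\in S$ and $x\in\omega(Y,\mathcal{F})$, and by the description of the $\omega$-limit set choose nets $(t_{\lambda})$ in $S$ and $(x_{\lambda})$ in $Y$ with $t_{\lambda}\longrightarrow_{\mathcal{F}}\infty$ and $t_{\lambda}x_{\lambda}\rightarrow x$. The aim is to produce a preimage of $x$ under $\mu_{s}$ that already lies in $\omega(Y,\mathcal{F})$. The first step is to factor the divergent net through $s$: exactly as in the proof of Proposition \ref{1.1}, Hypothesis $\mathrm{H}_{4}$ provides, for each $A\in\mathcal{F}$, some $B\in\mathcal{F}$ with $B\subset sA$, so that for $\lambda$ large enough one has $t_{\lambda}\in sA$ and may write $t_{\lambda}=s\,r_{\lambda}$ with $r_{\lambda}\in A$. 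By reindexing over $\Lambda\times\mathcal{F}$, directed by the original order on $\Lambda$ and by reverse inclusion on $\mathcal{F}$, one passes to a subnet along which the factors satisfy $r_{\lambda}\longrightarrow_{\mathcal{F}}\infty$, while keeping $x_{\lambda}\in Y$ and $s\,r_{\lambda}x_{\lambda}=t_{\lambda}x_{\lambda}\rightarrow x$.

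The second step uses asymptotic compactness. Since $(x_{\lambda})$ is bounded and $r_{\lambda}\longrightarrow_{\mathcal{F}}\infty$, the net $(r_{\lambda}x_{\lambda})$ admits a convergent subnet, say $r_{\lambda}x_{\lambda}\rightarrow y$; because $r_{\lambda}\longrightarrow_{\mathcal{F}}\infty$ and $x_{\lambda}\in Y$, the limit satisfies $y\in\omega(Y,\mathcal{F})$. Continuity of $\mu_{s}$ then gives $t_{\lambda}x_{\lambda}=\mu_{s}(r_{\lambda}x_{\lambda})\rightarrow\mu_{s}(y)=sy$ along this subnet, while the same subnet of $(t_{\lambda}x_{\lambda})$ still converges to $x$. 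Here the Hausdorff hypothesis enters decisively: uniqueness of limits forces $sy=x$, so $x=sy\in s\,\omega(Y,\mathcal{F})$. This proves $\omega(Y,\mathcal{F})\subset s\,\omega(Y,\mathcal{F})$, and together with the forward inclusion yields $s\,\omega(Y,\mathcal{F})=\omega(Y,\mathcal{F})$ for every $s$, that is, invariance.

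I expect the main obstacle to be the careful directed-set bookkeeping in the first step: keeping the factors $r_{\lambda}$ genuinely $\mathcal{F}$-divergent while simultaneously passing to a subnet along which $(t_{\lambda}x_{\lambda})$ still converges to $x$ and the factorization $t_{\lambda}=s\,r_{\lambda}$ holds. Once this reindexing is in place, asymptotic compactness, continuity of $\mu_{s}$, and Hausdorff uniqueness combine routinely. It is worth noting that the possible failure of $\mu_{s}$ to be injective is harmless here, since we need only \emph{some} preimage of $x$ inside $\omega(Y,\mathcal{F})$, not a canonical one.
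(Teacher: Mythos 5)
Your proposal is correct and follows essentially the same route as the paper: forward invariance from Hypothesis $\mathrm{H}_{1}$ via Remark \ref{Forwardinv}, and the reverse inclusion by using Hypothesis $\mathrm{H}_{4}$ to factor an $\mathcal{F}$-divergent net through $s$, applying asymptotic compactness to the factored net $\left( r_{\lambda }x_{\lambda }\right) $, and concluding $x=sy$ from continuity of $\mu _{s}$ and Hausdorff uniqueness of limits. The only immaterial difference is that the paper constructs its net directly from the closure description of $\omega \left( Y,\mathcal{F}\right) $, indexing over $\mathcal{F}\times \mathcal{O}$, rather than reindexing a given convergent net over $\Lambda \times \mathcal{F}$ as you do.
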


\begin{proof}
Let $Y\subset X$ be a bounded subset and $s\in S$. By Remark \ref{Forwardinv}%
, we have $s\omega \left( Y,\mathcal{F}\right) \subset \omega \left( Y,%
\mathcal{F}\right) $. On the other hand, let $x\in \omega \left( Y,\mathcal{F%
}\right) $. For a given $A\in \mathcal{F}$, there is $B\in \mathcal{F}$ such
that $B\subset sA$. Then we have%
\begin{equation*}
\emptyset \neq BY\cap \mathrm{St}\left[ x,\mathcal{U}\right] \subset sAY\cap 
\mathrm{St}\left[ x,\mathcal{U}\right]
\end{equation*}%
for all $\mathcal{U}\in \mathcal{O}$. Hence we can take $st_{\left( A,%
\mathcal{U}\right) }y_{\left( A,\mathcal{U}\right) }\in sAY\cap \mathrm{St}%
\left[ x,\mathcal{U}\right] $, with $t_{\left( A,\mathcal{U}\right) }\in A$
and $y_{\left( A,\mathcal{U}\right) }\in Y$. For a given $A_{0}\in \mathcal{F%
}$, $\left( A,\mathcal{V}\right) \geqslant \left( A_{0},\mathcal{U}\right) $
implies $t_{\left( A,\mathcal{V}\right) }\in A\subset A_{0}$. Hence, $%
t_{\left( A,\mathcal{U}\right) }\rightarrow _{\mathcal{F}}\infty $. Since $%
\left( S,X\right) $ is $\mathcal{F}$-asymptotically compact and $Y$ is
bounded, we can assume that $t_{\left( A,\mathcal{U}\right) }y_{\left( A,%
\mathcal{U}\right) }\longrightarrow y$ in $X$. Then $y\in \omega \left( Y,%
\mathcal{F}\right) $. Moreover, as $st_{\left( A,\mathcal{U}\right)
}y_{\left( A,\mathcal{U}\right) }\in \mathrm{St}\left[ x,\mathcal{U}\right] $
for every $\left( A,\mathcal{U}\right) \in \mathcal{F}\times \mathcal{O}$,
it follows that $st_{\left( A,\mathcal{U}\right) }y_{\left( A,\mathcal{U}%
\right) }\rightarrow x$. Since $st_{\left( A,\mathcal{U}\right) }y_{\left( A,%
\mathcal{U}\right) }\rightarrow sy$, we have $x=sy\in s\omega \left( Y,%
\mathcal{F}\right) $. Hence, $\omega \left( Y,\mathcal{F}\right) \subset
s\omega \left( Y,\mathcal{F}\right) $, and therefore $\omega \left( Y,%
\mathcal{F}\right) $ is invariant.
\end{proof}

\begin{proposition}
Assume that the semigroup action $\left( S,X\right) $ is $\mathcal{F}$%
-asymptotically compact and the set $AC$ is connected whenever $C\subset X$
is connected and $A\in \mathcal{F}$. If $Y\subset X$ is a bounded subset and
there is $C\supset Y$ connected which is attracted by $\omega \left( Y,%
\mathcal{F}\right) $, then $\omega \left( Y,\mathcal{F}\right) $ is
connected.
\end{proposition}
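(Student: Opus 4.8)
The plan is to argue by contradiction using the standard topological fact that a compact Hausdorff-type space fails to be connected precisely when it admits a separation into two nonempty, disjoint, relatively closed (equivalently relatively open) pieces. By Proposition \ref{limitset2}, the set $\omega(Y,\mathcal{F})$ is nonempty and compact, so it suffices to rule out such a separation. First I would suppose, toward a contradiction, that there exist nonempty closed sets $K_{1},K_{2}$ with $\omega(Y,\mathcal{F})=K_{1}\cup K_{2}$ and $K_{1}\cap K_{2}=\emptyset$. Since $\omega(Y,\mathcal{F})$ is compact, each $K_{i}$ is compact, and in a completely regular space two disjoint compact sets can be separated by disjoint open stars: there is some $\mathcal{U}\in\mathcal{O}$ such that $\mathrm{St}[K_{1},\mathcal{U}]$ and $\mathrm{St}[K_{2},\mathcal{U}]$ are disjoint. (This uses property 2 of Definition \ref{Admiss} to shrink the stars around the compact pieces so that they no longer meet.)

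Next I would exploit the hypothesis that $\omega(Y,\mathcal{F})$ attracts $C$. By Proposition \ref{limitset2}, $\omega(Y,\mathcal{F})$ attracts the bounded set $Y$; the extra assumption in the statement upgrades this to attraction of the connected superset $C\supset Y$. Fixing the separating covering $\mathcal{U}$ above, attraction yields some $A\in\mathcal{F}$ with $AC\subset\mathrm{St}[\omega(Y,\mathcal{F}),\mathcal{U}]=\mathrm{St}[K_{1},\mathcal{U}]\cup\mathrm{St}[K_{2},\mathcal{U}]$. The two stars on the right are disjoint open sets, so this exhibits $AC$ as lying inside the union of two disjoint open sets. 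By the structural hypothesis of the proposition, $AC$ is connected (since $C$ is connected and $A\in\mathcal{F}$); hence $AC$ must be entirely contained in one of the two stars, say $AC\subset\mathrm{St}[K_{1},\mathcal{U}]$, missing $\mathrm{St}[K_{2},\mathcal{U}]$ altogether.

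Finally I would derive the contradiction from the fact that $K_{2}\subset\omega(Y,\mathcal{F})$ is nonempty and its points arise as limits of nets $t_{\lambda}y_{\lambda}$ with $t_{\lambda}\to_{\mathcal{F}}\infty$ and $y_{\lambda}\in Y\subset C$. Pick any $x\in K_{2}$. Along such a defining net, $\mathcal{F}$-divergence forces $t_{\lambda}\in A$ eventually, so $t_{\lambda}y_{\lambda}\in AC\subset\mathrm{St}[K_{1},\mathcal{U}]$ eventually, while $t_{\lambda}y_{\lambda}\to x\in K_{2}$. But $\mathrm{St}[K_{1},\mathcal{U}]$ is open with closure disjoint from a suitably shrunk star around $K_{2}$, so no net trapped in $\mathrm{St}[K_{1},\mathcal{U}]$ can converge to a point of $K_{2}$ — a contradiction. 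I expect the main obstacle to be the first step, namely producing a single $\mathcal{U}\in\mathcal{O}$ whose stars genuinely separate $K_{1}$ and $K_{2}$: one must invoke compactness of the $K_{i}$ together with property 2 of Definition \ref{Admiss}, and possibly pass through a double-refinement (as in the compactness argument of Proposition \ref{limitset2}) to ensure the separating stars remain disjoint and the limit of the trapped net cannot escape into $K_{2}$. The rest is a routine separation-and-divergence argument once this covering is in hand.
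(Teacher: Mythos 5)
Your proposal is correct and follows essentially the same route as the paper: assume a separation $\omega(Y,\mathcal{F})=K_{1}\cup K_{2}$, separate the two compact pieces by disjoint stars, use attraction of $C$ together with connectedness of $AC$ to trap $AC$ in one star, and contradict the fact that $K_{2}\subset\omega(Y,\mathcal{F})\subset\mathrm{cls}(AC)$. The only cosmetic difference is that the paper passes through a double-refinement $\mathcal{V}\leqslant\frac{1}{2}\mathcal{U}$ to get $\mathrm{cls}(\mathrm{St}[K_{1},\mathcal{V}])\subset\mathrm{St}[K_{1},\mathcal{U}]$, whereas your net formulation of the final contradiction works directly since $\mathrm{St}[K_{2},\mathcal{U}]$ is already an open neighborhood of each point of $K_{2}$ disjoint from $\mathrm{St}[K_{1},\mathcal{U}]$.
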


\begin{proof}
Suppose that $C\supset Y$ is connected and $\omega \left( Y,\mathcal{F}%
\right) $ attracts $C$. Suppose by contradiction that $\omega \left( Y,%
\mathcal{F}\right) $ is disconnected. Then $\omega \left( Y,\mathcal{F}%
\right) =K_{1}\cup K_{2}$, where $K_{1}$ and $K_{2}$ are two nonempty closed
sets with $K_{1}\cap K_{2}=\emptyset $. By Proposition \ref{limitset2}, $%
\omega \left( Y,\mathcal{F}\right) $ is compact, and then both $K_{1}$ and $%
K_{2}$ are compact. Hence, there is $\mathcal{U}\in \mathcal{O}$ such that $%
\mathrm{St}\left[ K_{1},\mathcal{U}\right] \cap \mathrm{St}\left[ K_{2},%
\mathcal{U}\right] =\emptyset $. Take $\mathcal{V}\in \mathcal{O}$ with $%
\mathcal{V}\leqslant \frac{1}{2}\mathcal{U}$. Since $\omega \left( Y,%
\mathcal{F}\right) $ attracts $C$, there is $A\in \mathcal{F}$ such that%
\begin{equation*}
AC\subset \mathrm{St}\left[ \omega \left( Y,\mathcal{F}\right) ,\mathcal{V}%
\right] =\mathrm{St}\left[ K_{1},\mathcal{V}\right] \cup \mathrm{St}\left[
K_{2},\mathcal{V}\right] .
\end{equation*}%
Since $AC$ is connected and $\mathrm{St}\left[ K_{1},\mathcal{V}\right] \cap 
\mathrm{St}\left[ K_{2},\mathcal{V}\right] =\emptyset $, we have either $%
AC\subset \mathrm{St}\left[ K_{1},\mathcal{V}\right] $ or $AC\subset \mathrm{%
St}\left[ K_{2},\mathcal{V}\right] $. Suppose that $AC\subset \mathrm{St}%
\left[ K_{1},\mathcal{V}\right] $. Then we have%
\begin{equation*}
K_{2}\subset \omega \left( Y,\mathcal{F}\right) \subset \mathrm{cls}\left(
AC\right) \subset \mathrm{cls}\left( \mathrm{St}\left[ K_{1},\mathcal{V}%
\right] \right) \subset \mathrm{St}\left[ K_{1},\mathcal{U}\right]
\end{equation*}%
which contradicts $\mathrm{St}\left[ K_{1},\mathcal{U}\right] \cap \mathrm{St%
}\left[ K_{2},\mathcal{U}\right] =\emptyset $. Therefore $\omega \left( Y,%
\mathcal{F}\right) $ is connected.
\end{proof}

Limit compact semigroup actions seem quite a bit like asymptotically compact
semigroup actions, as the following.

\begin{theorem}
If $\left( S,X\right) $ is $\mathcal{F}$-asymptotically compact then it is $%
\mathcal{F}$-limit compact. The converse holds if $X$ is complete.
\end{theorem}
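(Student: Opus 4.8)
The two implications are essentially independent, so I would treat them one at a time.

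\emph{Forward implication.} Rather than argue from the definition of limit compactness directly, I would lean on Proposition \ref{limitset2}. Fix a nonempty bounded $B\subset X$ and a covering $\mathcal{U}\in\mathcal{O}$ (the empty case being trivial). Asymptotic compactness gives, via Proposition \ref{limitset2}, that $K=\omega(B,\mathcal{F})$ is nonempty, compact, and attracts $B$. The idea is to combine the two features of $K$ at a common scale: pick a double-refinement $\mathcal{V}\leqslant\frac{1}{2}\mathcal{U}$; compactness makes $K$ totally bounded, so $K\subset\bigcup_{i=1}^{n}\mathrm{St}[p_{i},\mathcal{V}]$ for finitely many $p_{i}$, while attraction yields $A\in\mathcal{F}$ with $AB\subset\mathrm{St}[K,\mathcal{V}]$. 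The only routine step is the inclusion $\mathrm{St}[K,\mathcal{V}]\subset\bigcup_{i=1}^{n}\mathrm{St}[p_{i},\mathcal{U}]$: a point of $\mathrm{St}[K,\mathcal{V}]$ shares a $\mathcal{V}$-set with some $y\in K$, which shares a $\mathcal{V}$-set with some $p_{i}$, and since these two $\mathcal{V}$-sets meet at $y$, the double-refinement property puts the point and $p_{i}$ in a common $\mathcal{U}$-set. Hence $AB$ admits a finite cover by $\mathcal{U}$-stars, i.e. $\mathcal{U}\in\alpha(AB)$, which is exactly $\mathcal{F}$-limit compactness. Note this direction needs no extra hypothesis on $\mathcal{F}$ and no completeness.

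\emph{Converse, set-up.} For the converse I assume $X$ complete and $(S,X)$ $\mathcal{F}$-limit compact, and take a bounded net $(x_{\lambda})_{\lambda\in\Lambda}$ with underlying set $B$ together with $(t_{\lambda})$ satisfying $t_{\lambda}\to_{\mathcal{F}}\infty$. Since a net with a cluster point has a convergent subnet, the goal is to produce a cluster point of $(t_{\lambda}x_{\lambda})$. The plan is to apply the Cantor--Kuratowski theorem (Theorem \ref{CantorKuratowski}) to the decreasing net of closed tails $F_{\lambda_{0}}=\mathrm{cls}\{t_{\lambda}x_{\lambda}:\lambda\geq\lambda_{0}\}$, whose intersection over $\lambda_{0}$ is precisely the cluster set of the net.

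\emph{Converse, verifying the hypotheses.} The substance is to check that each $F_{\lambda_{0}}$ is bounded and that $\alpha(F_{\lambda_{0}})\to\mathcal{O}$. Boundedness is the step I expect to be the \emph{main obstacle}, because $AB$ need not be bounded in general. It is resolved by the remark that a finite $\mathcal{U}$-star cover of $A_{\mathcal{U}}B$ places $A_{\mathcal{U}}B$ inside $\mathrm{St}[\{p_{1},\dots,p_{n}\},\mathcal{U}]$, and since a finite set is bounded, Remark \ref{R2} makes this star, hence $A_{\mathcal{U}}B$, bounded. As $t_{\lambda}\to_{\mathcal{F}}\infty$, for every $\lambda_{0}$ past the threshold at which $t_{\lambda}$ enters such an $A_{\mathcal{U}}$ the tail lies in $A_{\mathcal{U}}B$ and is therefore bounded; restricting the index to this cofinal set of $\lambda_{0}$ (which leaves the intersection unchanged) makes all $F_{\lambda_{0}}$ bounded. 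For the measure condition, given $\mathcal{U}$ I would choose $\mathcal{V}\leqslant\frac{1}{2}\mathcal{U}$, apply limit compactness at scale $\mathcal{V}$ to get $A_{\mathcal{V}}$ with $\mathcal{V}\in\alpha(A_{\mathcal{V}}B)$, and note that for $\lambda_{0}$ past the divergence threshold for $A_{\mathcal{V}}$ the tail lies in $A_{\mathcal{V}}B$, so $\mathcal{V}\in\alpha(\text{tail})$ and hence $\mathcal{U}\in 1\alpha(\text{tail})\subset\alpha(F_{\lambda_{0}})$ by Proposition \ref{P9}(4); this is exactly $\alpha(F_{\lambda_{0}})\to\mathcal{O}$.

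\emph{Conclusion.} With both hypotheses in place, Theorem \ref{CantorKuratowski} yields that $\bigcap_{\lambda_{0}}F_{\lambda_{0}}$ is nonempty (and compact); any point of it is a cluster point of $(t_{\lambda}x_{\lambda})$, so this net has a convergent subnet and $(S,X)$ is $\mathcal{F}$-asymptotically compact. Completeness of $X$ enters only at this last step, through Cantor--Kuratowski, which explains why it is required for the converse but not for the forward implication.
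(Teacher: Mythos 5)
Your proposal is correct and follows essentially the same route as the paper's own proof: the forward implication through Proposition \ref{limitset2} together with a finite star cover of the compact limit set $\omega(B,\mathcal{F})$, and the converse through the Cantor--Kuratowski theorem applied to the decreasing net of closed tails. The one notable difference is that you explicitly verify that the tails $F_{\lambda_{0}}$ are bounded (via the finite star cover and Remark \ref{R2}), which is a hypothesis in the statement of Theorem \ref{CantorKuratowski} that the paper's own proof applies without checking.
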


\begin{proof}
Suppose that $\left( S,X\right) $ is $\mathcal{F}$-asymptotically compact
and let $Y\subset X$ be a nonempty bounded set. By Proposition \ref%
{limitset2}, $\omega \left( Y,\mathcal{F}\right) $ is nonempty, compact, and
attracts $Y$. For a given $\mathcal{V}\in \mathcal{O}$, the open covering $%
\omega \left( Y,\mathcal{F}\right) \subset \underset{x\in \omega \left( Y,%
\mathcal{F}\right) }{\bigcup }\mathrm{St}\left[ x,\mathcal{V}\right] $
admits a finite subcovering $\omega \left( Y,\mathcal{F}\right) \subset 
\overset{n}{\underset{i=1}{\bigcup }}\mathrm{St}\left[ x_{i},\mathcal{V}%
\right] $. Take $\mathcal{U}\in \mathcal{O}$ such that $\mathrm{St}\left[
\omega \left( Y,\mathcal{F}\right) ,\mathcal{U}\right] \subset \overset{n}{%
\underset{i=1}{\bigcup }}\mathrm{St}\left[ x_{i},\mathcal{V}\right] $. Since 
$\omega \left( Y,\mathcal{F}\right) $ attracts $Y$, there is $A\in \mathcal{F%
}$ such that $AY\subset \mathrm{St}\left[ \omega \left( Y,\mathcal{F}\right)
,\mathcal{U}\right] \subset \overset{n}{\underset{i=1}{\bigcup }}\mathrm{St}%
\left[ x_{i},\mathcal{V}\right] $, hence $\mathcal{V}\in \beta \left(
AY\right) $. Thus $\left( S,X\right) $ is $\mathcal{F}$-limit compact. For
the converse, suppose that $X$ is a complete admissible space and $\left(
S,X\right) $ is $\mathcal{F}$-limit compact. Let $\left( x_{\lambda }\right)
_{\lambda \in \Lambda }$ be a bounded net in $X$ and $t_{\lambda
}\longrightarrow _{\mathcal{F}}\infty $ in $S$. Set $Y=\left\{ x_{\lambda
}:\lambda \in \Lambda \right\} $. For each $\mathcal{U}\in \mathcal{O}$, we
can use the limit compactness and Proposition \ref{P9} to find $A_{\mathcal{U%
}}\in \mathcal{F}$ such that $\mathcal{U}\in \beta \left( \mathrm{cls}\left(
A_{\mathcal{U}}Y\right) \right) $. For each $\lambda \in \Lambda $, set $%
F_{\lambda }=\mathrm{cls}\left\{ t_{\lambda ^{\prime }}x_{\lambda ^{\prime
}}:\lambda ^{\prime }\geq \lambda \right\} $. For a given $\mathcal{U}\in 
\mathcal{O}$, there is $\lambda _{\mathcal{U}}$ such that $\lambda \geq
\lambda _{\mathcal{U}}$ implies $t_{\lambda }\in A_{\mathcal{U}}$. Hence, $%
F_{\lambda }\subset \mathrm{cls}\left( A_{\mathcal{U}}Y\right) $ for all $%
\lambda \geq \lambda _{\mathcal{U}}$, and therefore $\beta \left( F_{\lambda
}\right) \prec \beta \left( \mathrm{cls}\left( A_{\mathcal{U}}Y\right)
\right) \prec \left\{ \mathcal{U}\right\} $ for all $\lambda \geq \lambda _{%
\mathcal{U}}$. This means that $\gamma \left( F_{\lambda }\right)
\rightarrow \mathcal{O}$. Moreover, if $\lambda ^{\prime }\geq \lambda $
then $F_{\lambda ^{\prime }}\subset F_{\lambda }$, and then $\left(
F_{\lambda }\right) _{\lambda \in \Lambda }$ is a decreasing net of nonempty
closed sets. By Theorem \ref{CantorKuratowski}, the intersection $\underset{%
\lambda \in \Lambda }{\bigcap }F_{\lambda }$ is nonempty, and then we can
get $x\in \underset{\lambda \in \Lambda }{\bigcap }F_{\lambda }$. Now, for
each $\mathcal{U}\in \mathcal{O}$ and $\lambda \in \Lambda $, there is $%
\lambda _{\mathcal{U}}^{\prime }\geq \lambda $ such that $t_{\lambda _{%
\mathcal{U}}^{\prime }}x_{\lambda _{\mathcal{U}}^{\prime }}\in \mathrm{St}%
\left[ x,\mathcal{U}\right] $, because $x\in F_{\lambda }$. Hence $\left(
t_{\lambda _{\mathcal{U}}^{\prime }}x_{\lambda _{\mathcal{U}}^{\prime
}}\right) $ is a subnet of $\left( t_{\lambda }x_{\lambda }\right) $ such
that $t_{\lambda _{\mathcal{U}}^{\prime }}x_{\lambda _{\mathcal{U}}^{\prime
}}\rightarrow x$. Therefore $\left( S,X\right) $ is $\mathcal{F}$%
-asymptotically compact.
\end{proof}

\section{Global Attractors}

In this section we discuss necessary and sufficient conditions for the
existence of global attractor for semigroup actions on completely regular
spaces. Throughout, there is a fixed semigroup action $\left( S,X,\mu
\right) $ with $X$ an admissible space endowed with an admissible family of
open coverings $\mathcal{O}$. We refer to \cite{Stephanie} and \cite{So4}
for previous discussion and illustrating examples of global attractors.

\begin{definition}
\label{GlobalAttractor} The subset $\mathcal{A}$ of $X$ is called \textbf{%
global }$\mathcal{F}$\textbf{-attractor} if $\mathcal{A}$ is nonempty,
closed, compact, invariant, and it attracts every bounded subset of $X$; it
is called \textbf{global uniform }$\mathcal{F}$\textbf{-attractor} if it is
a compact invariant set, $J\left( x,\mathcal{F}\right) \neq \emptyset $, for
every $x\in X$, and $J\left( X,\mathcal{F}\right) \subset \mathcal{A}$.
\end{definition}

There is at most one global $\mathcal{F}$-attractor for the semigroup action.

\begin{theorem}
\label{1.2} If the semigroup action $\left( S,X\right) $ admits a global $%
\mathcal{F}$-attractor $\mathcal{A}$ then it is unique and coincides with
the reunion of all bounded invariant subsets of $X$.
\end{theorem}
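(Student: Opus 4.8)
The plan is to establish the characterization ``$\mathcal{A}$ equals the union of all bounded invariant subsets'' first, and then read off uniqueness as an immediate corollary. Writing $\mathcal{B}=\bigcup\left\{ Y\subset X:Y\text{ is bounded and invariant}\right\} $, this set is defined intrinsically and makes no reference to any chosen attractor, so once I show that \emph{every} global $\mathcal{F}$-attractor coincides with $\mathcal{B}$, any two such attractors are forced to be equal. I would therefore spend the body of the argument on the two inclusions $\mathcal{A}\subset\mathcal{B}$ and $\mathcal{B}\subset\mathcal{A}$.

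For $\mathcal{A}\subset\mathcal{B}$, the first step is to note that $\mathcal{A}$ is itself bounded: being compact, for each $\mathcal{U}\in\mathcal{O}$ the open cover $\left\{ \mathrm{St}\left[ x,\mathcal{U}\right] :x\in\mathcal{A}\right\} $ admits a finite subcover, so $\mathcal{A}$ is totally bounded, hence bounded by Remark~\ref{R1}. Since $\mathcal{A}$ is also invariant by hypothesis, it is one of the sets appearing in the union defining $\mathcal{B}$, which immediately gives $\mathcal{A}\subset\mathcal{B}$.

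For the reverse inclusion $\mathcal{B}\subset\mathcal{A}$, let $Y$ be an arbitrary bounded invariant subset; the goal is $Y\subset\mathcal{A}$. The key observation is that invariance yields $AY=\bigcup_{s\in A}sY=Y$ for every nonempty $A\subset S$, in particular for every $A\in\mathcal{F}$ (nonempty because $\emptyset\notin\mathcal{F}$). Since $\mathcal{A}$ attracts the bounded set $Y$, for each $\mathcal{U}\in\mathcal{O}$ there is $A\in\mathcal{F}$ with $Y=AY\subset\mathrm{St}\left[ \mathcal{A},\mathcal{U}\right] $. As this holds for every $\mathcal{U}$, the closure formula $\mathrm{cls}\left( \mathcal{A}\right) =\bigcap_{\mathcal{U}\in\mathcal{O}}\mathrm{St}\left[ \mathcal{A},\mathcal{U}\right] $ recorded in the remark after Definition~\ref{Admiss}, together with the fact that $\mathcal{A}$ is closed, gives $Y\subset\bigcap_{\mathcal{U}\in\mathcal{O}}\mathrm{St}\left[ \mathcal{A},\mathcal{U}\right] =\mathrm{cls}\left( \mathcal{A}\right) =\mathcal{A}$. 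Hence $\mathcal{B}\subset\mathcal{A}$, and combining with the previous step yields $\mathcal{A}=\mathcal{B}$.

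The argument is short and I do not expect a serious obstacle. The one point that needs care is the collapse of the family of stars: it is precisely the closedness of $\mathcal{A}$ that turns $\bigcap_{\mathcal{U}}\mathrm{St}\left[ \mathcal{A},\mathcal{U}\right] $ into $\mathcal{A}$ itself, and it is the condition $\emptyset\notin\mathcal{F}$ that legitimizes the identity $AY=Y$ for invariant $Y$. Everything else is routine bookkeeping, and uniqueness then follows with no extra work, since $\mathcal{B}$ does not depend on the attractor chosen.
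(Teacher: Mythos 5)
Your proof is correct, and it is organized differently from the paper's. The paper proves uniqueness first, by a symmetric mutual-attraction argument: given two attractors $\mathcal{A}$ and $\mathcal{A}^{\prime }$, it invokes Proposition \ref{P3} to write attraction as $\rho _{\mathcal{A}}\left( t_{\lambda }\mathcal{A}^{\prime }\right) \rightarrow \mathcal{O}$ along $\mathcal{F}$-divergent nets, uses invariance to see that this net is constant and hence $\rho _{\mathcal{A}}\left( \mathcal{A}^{\prime }\right) =\mathcal{O}$, and concludes $\mathcal{A}^{\prime }\subset \mathrm{cls}\left( \mathcal{A}\right) =\mathcal{A}$ via the $\rho $-characterization of closure (Proposition \ref{R4}); it then proves the union characterization as a separate second step by the same mechanism. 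You instead prove the characterization $\mathcal{A}=\mathcal{B}$ first, working directly with the covering definition of attraction: invariance gives $AY=Y$ for every nonempty $A\in \mathcal{F}$, so attraction yields $Y\subset \mathrm{St}\left[ \mathcal{A},\mathcal{U}\right] $ for every $\mathcal{U}\in \mathcal{O}$, and the star-intersection formula for the closure plus closedness of $\mathcal{A}$ finishes the inclusion. Your route buys two things: uniqueness comes for free (since $\mathcal{B}$ is defined without reference to any particular attractor), so the symmetric argument is not needed; and you bypass the net machinery of Propositions \ref{P3} and \ref{R4} entirely, using only the remark following Definition \ref{Admiss}. The underlying key fact is the same in both proofs --- a bounded invariant set attracted by a closed set must lie inside it --- so the difference is one of economy and of which paper-internal tools carry the argument, not of mathematical substance.
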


\begin{proof}
Let $\mathcal{A}$ and $\mathcal{A}^{\prime }$ be two global $\mathcal{F}$%
-attractors. Since $\mathcal{A}^{\prime }$ is compact, it is bounded, by
Remark \ref{R1}. Hence, $\mathcal{A}$ attracts $\mathcal{A}^{\prime }$, that
is, $\rho _{\mathcal{A}}\left( t_{\lambda }\mathcal{A}^{\prime }\right)
\rightarrow \mathcal{O}$ for every net $t_{\lambda }\rightarrow _{\mathcal{F}%
}\infty $. Since $\mathcal{A}^{\prime }$ is invariant, it follows that $\rho
_{\mathcal{A}}\left( t_{\lambda }\mathcal{A}^{\prime }\right) =\rho _{%
\mathcal{A}}\left( \mathcal{A}^{\prime }\right) $ and hence $\rho _{\mathcal{%
A}}\left( \mathcal{A}^{\prime }\right) =\mathcal{O}$. Therefore $\mathcal{A}%
^{\prime }\subset \mathrm{cls}\left( \mathcal{A}\right) =\mathcal{A}$. In
the same way we show that $\mathcal{A}\subset \mathcal{A}^{\prime }$. Thus $%
\mathcal{A}=\mathcal{A}^{\prime }$. Now, let $\left\{ B_{\sigma }\right\}
_{\sigma \in \Sigma }$ be the set of all bounded invariant subsets of $X$.
Since $\mathcal{A}$ is bounded, we have $\mathcal{A}\subset \bigcup_{\sigma
\in \Sigma }B_{\sigma }$. On the other hand, $\mathcal{A}$ attracts every $%
B_{\sigma }$, that is, $\rho _{\mathcal{A}}\left( t_{\lambda }B_{\sigma
}\right) \rightarrow \mathcal{O}$ for every net $t_{\lambda }\rightarrow _{%
\mathcal{F}}\infty $. As $B_{\sigma }$ is invariant, it follows that $%
B_{\sigma }\subset \mathcal{A}$, and therefore $\bigcup_{\sigma \in \Sigma
}B_{\sigma }\subset \mathcal{A}$.
\end{proof}

Similarly, there is at most one global uniform $\mathcal{F}$-attractor for $%
\left( S,X\right) $ (\cite[Theorem 2.4]{So4}).

The following theorem presents a criteria for the existence of the global
attractor together with a characterization of it.

\begin{theorem}
\label{1.3} If the semigroup action $\left( S,X\right) $ has a global $%
\mathcal{F}$-attractor $\mathcal{A}$ then $\left( S,X\right) $ is $\mathcal{F%
}$-eventually bounded, $\mathcal{F}$-bounded dissipative, and $\mathcal{F}$%
-asymptotically compact, and 
\begin{eqnarray*}
\mathcal{A} &=&\underset{Y\in \mathfrak{B}}{\bigcup }\omega \left( Y,%
\mathcal{F}\right) \\
&=&\left\{ x\in X:\text{there is a bounded net }\left( x_{\lambda }\right) 
\text{ in }X\text{ and }t_{\lambda }\rightarrow _{\mathcal{F}}\infty \text{
such that }t_{\lambda }x_{\lambda }\rightarrow x\right\}
\end{eqnarray*}%
where $\mathfrak{B}$ is the collection of all bounded subsets of $X$. The
converse holds if the $\omega $-limit sets of bounded subsets of $X$ are
invariant.
\end{theorem}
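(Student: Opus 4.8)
The theorem has two directions. The forward direction assumes a global $\mathcal{F}$-attractor $\mathcal{A}$ exists and derives the three dynamical properties plus the characterization of $\mathcal{A}$; the converse assumes the three properties (plus invariance of $\omega$-limit sets) and constructs the attractor. I would treat the forward direction first, since it is largely a matter of unwinding the definitions against the tools already established (Propositions \ref{limitset2}, \ref{BoundedDissip}, \ref{P3}, and the uniqueness Theorem \ref{1.2}).

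For the forward direction, I would proceed as follows. To get \emph{$\mathcal{F}$-bounded dissipativeness}: since $\mathcal{A}$ is compact it is bounded (Remark \ref{R1}), and by hypothesis $\mathcal{A}$ attracts every bounded set, so Proposition \ref{BoundedDissip} immediately gives bounded dissipativeness. To get \emph{$\mathcal{F}$-eventual boundedness}: given a bounded $Y$, attraction of $Y$ by $\mathcal{A}$ furnishes $A\in\mathcal{F}$ with $AY\subset\mathrm{St}[\mathcal{A},\mathcal{U}]$ for a fixed $\mathcal{U}$; since $\mathcal{A}$ is bounded, $\mathrm{St}[\mathcal{A},\mathcal{U}]$ is bounded by Remark \ref{R2}, hence $AY$ is bounded. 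To get \emph{$\mathcal{F}$-asymptotic compactness}: take a bounded net $(x_\lambda)$ and $t_\lambda\to_{\mathcal{F}}\infty$; set $Y=\{x_\lambda\}$, which is bounded, so $\rho_{\mathcal{A}}(t_\lambda Y)\to\mathcal{O}$ by Proposition \ref{P3}, and since the compact $\mathcal{A}$ lies in a complete-like neighborhood the net $(t_\lambda x_\lambda)$ eventually enters every star of $\mathcal{A}$; compactness of $\mathcal{A}$ then yields a convergent subnet. For the \emph{characterization} $\mathcal{A}=\bigcup_{Y\in\mathfrak{B}}\omega(Y,\mathcal{F})$: the second equality is just the definition of $\omega$-limit sets ranged over all bounded $Y$; the inclusion $\bigcup\omega(Y,\mathcal{F})\subset\mathcal{A}$ follows because each $\omega(Y,\mathcal{F})$ is attracted into $\mathcal{A}$ and, being the smallest closed attracting set by Proposition \ref{limitset2}, lies in the closed set $\mathcal{A}$; the reverse inclusion uses that $\mathcal{A}$ is bounded and invariant, so $\mathcal{A}=\omega(\mathcal{A},\mathcal{F})$ appears in the union.

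For the converse, I would assume $(S,X)$ is $\mathcal{F}$-eventually bounded, $\mathcal{F}$-bounded dissipative, and $\mathcal{F}$-asymptotically compact, with all $\omega(Y,\mathcal{F})$ invariant, and set $\mathcal{A}:=\bigcup_{Y\in\mathfrak{B}}\omega(Y,\mathcal{F})$. Bounded dissipativeness gives a bounded absorbing set $D$, and I would argue $\mathcal{A}=\omega(D,\mathcal{F})$: every $\omega(Y,\mathcal{F})$ is absorbed into $D$ and hence its forward images land in $D$, forcing $\omega(Y,\mathcal{F})\subset\omega(D,\mathcal{F})$. Then Proposition \ref{limitset2} (applicable by asymptotic compactness) makes $\omega(D,\mathcal{F})$ nonempty, compact, and attracting for $D$; invariance is the standing hypothesis. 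Closedness and compactness transfer to $\mathcal{A}$, and attraction of an arbitrary bounded $Y$ follows by chaining: $Y$ is absorbed into $D$, and $\mathcal{A}$ attracts $D$, so a routine filter-composition (using Proposition \ref{P3} and the $n$-refinement triangle inequality of Proposition \ref{P1}) shows $\mathcal{A}$ attracts $Y$.

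\textbf{The main obstacle.} The delicate point is the converse, specifically establishing $\mathcal{A}=\omega(D,\mathcal{F})$ and that this single $\omega$-limit set attracts \emph{all} bounded sets rather than just $D$. Transitivity of attraction is not formal in this uniformizable setting: one must combine the absorbing relation $AY\subset D$ with the attraction estimate for $D$ while controlling the star sizes through the double-refinement machinery, and one must invoke Hypothesis-type translation or invariance to ensure the composed net still $\mathcal{F}$-diverges. I expect the bulk of the work, and the only genuinely subtle step, to be verifying that absorption into $D$ plus attraction of $D$ yields attraction of $Y$ uniformly in $\mathcal{U}\in\mathcal{O}$, together with the identification of the (a priori large) union $\bigcup_Y\omega(Y,\mathcal{F})$ with the compact set $\omega(D,\mathcal{F})$.
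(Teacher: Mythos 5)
Your forward direction matches the paper's proof step for step: bounded dissipativeness from Proposition \ref{BoundedDissip}, eventual boundedness from Remark \ref{R2} applied to $\mathrm{St}\left[ \mathcal{A},\mathcal{U}\right] $, asymptotic compactness by pulling the net $\left( t_{\lambda }x_{\lambda }\right) $ into stars of the compact set $\mathcal{A}$ and extracting a convergent subnet, and the characterization via minimality of $\omega \left( Y,\mathcal{F}\right) $ together with $\mathcal{A}=\omega \left( \mathcal{A},\mathcal{F}\right) $. The converse also has the same skeleton as the paper (define $\mathcal{A}=\bigcup_{Y\in \mathfrak{B}}\omega \left( Y,\mathcal{F}\right) $, identify it with $\omega \left( D,\mathcal{F}\right) $ for the bounded absorbing set $D$, and invoke Proposition \ref{limitset2}).

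The one place you go astray is the step you single out as the crux. You propose to prove that $\mathcal{A}$ attracts an arbitrary bounded $Y$ by \emph{transitivity}: absorb $Y$ into $D$ via $AY\subset D$, then use $BD\subset \mathrm{St}\left[ \mathcal{A},\mathcal{U}\right] $ to get $BAY\subset \mathrm{St}\left[ \mathcal{A},\mathcal{U}\right] $. As you yourself note, concluding attraction from this requires that $BA$ contain a member of $\mathcal{F}$, i.e.\ a translation hypothesis of the type in Definition \ref{hipH} --- and the theorem assumes none of these. So carried out literally, this route stalls. But the step is unnecessary: for each bounded $Y$, Proposition \ref{limitset2} already says $\omega \left( Y,\mathcal{F}\right) $ attracts $Y$, and since $\omega \left( Y,\mathcal{F}\right) \subset \mathcal{A}$ by construction, monotonicity of attraction in the attracting set ($\mathrm{St}\left[ \omega \left( Y,\mathcal{F}\right) ,\mathcal{U}\right] \subset \mathrm{St}\left[ \mathcal{A},\mathcal{U}\right] $) gives that $\mathcal{A}$ attracts $Y$ directly. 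This is what the paper does. The absorbing set $D$ enters only to prove \emph{compactness} of the a priori large union: $\omega \left( Y,\mathcal{F}\right) \subset \mathrm{cls}\left( D\right) $ for every bounded $Y$, hence $\mathcal{A}\subset \mathrm{cls}\left( D\right) $ and, using invariance, $\mathcal{A}=\omega \left( \mathcal{A},\mathcal{F}\right) \subset \omega \left( D,\mathcal{F}\right) \subset \mathcal{A}$, so $\mathcal{A}=\omega \left( D,\mathcal{F}\right) $ is compact by Proposition \ref{limitset2}. With that reallocation of effort your argument closes without any extra hypotheses on $\mathcal{F}$.
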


\begin{proof}
By Proposition \ref{BoundedDissip}, $\left( S,X\right) $ is $\mathcal{F}$%
-bounded dissipative. Let $Y$ be a bounded subset of $X$. For $\mathcal{U}%
\in \mathcal{O}$, there is $A\in \mathcal{F}$ such that $AY\subset \mathrm{St%
}\left[ \mathcal{A},\mathcal{U}\right] $. Since $\mathcal{A}$ is bounded, $%
\mathrm{St}\left[ \mathcal{A},\mathcal{U}\right] $ is also bounded, by
Remark \ref{R2}. Hence $AY$ is bounded, and therefore $\left( S,X\right) $
is $\mathcal{F}$-eventually bounded. Now, let $\left( x_{\lambda }\right)
_{\lambda \in \Lambda }$ be a bounded net in $X$ and $t_{\lambda
}\rightarrow _{\mathcal{F}}\infty $. By taking the bounded set $Y=\left\{
x_{\lambda }\in X:\lambda \in \Lambda \right\} $, we have $\rho _{\mathcal{A}%
}\left( t_{\lambda }Y\right) \rightarrow \mathcal{O}$. Hence $\rho _{%
\mathcal{A}}\left( t_{\lambda }y_{\lambda }\right) \rightarrow \mathcal{O}$.
For each $\mathcal{U}\in \mathcal{O}$ and $\lambda \in \Lambda $, there is $%
\lambda _{\mathcal{U}}\geq \lambda $ such that $\mathcal{U}\in \rho _{%
\mathcal{A}}\left( t_{\lambda _{\mathcal{U}}}y_{\lambda _{\mathcal{U}%
}}\right) $, and hence there is $a_{\lambda _{\mathcal{U}}}\in \mathcal{A}$
such that $\mathcal{U}\in \rho \left( t_{\lambda _{\mathcal{U}}}y_{\lambda _{%
\mathcal{U}}},a_{\lambda _{\mathcal{U}}}\right) $. By the compactness of $%
\mathcal{A}$, we may assume that $a_{\lambda _{\mathcal{U}}}$ converges to
some point $a\in \mathcal{A}$. Now, for a given $\mathcal{W}\in \mathcal{O}$%
, take $\mathcal{V}\in \mathcal{O}$ with $\mathcal{V}\leqslant \frac{1}{2}%
\mathcal{W}$. Since $a_{\lambda _{\mathcal{U}}}\rightarrow a$, there is $%
\mathcal{U}_{0}\in \mathcal{O}$ such that $\mathcal{U}\leqslant \mathcal{U}%
_{0}$ implies $a_{\lambda _{\mathcal{U}}}\in \mathrm{St}\left[ a,\mathcal{V}%
\right] $. If $\mathcal{U}\leqslant \mathcal{U}_{0}$ and $\mathcal{U}%
\leqslant \mathcal{V}$, we have $a_{\lambda _{\mathcal{U}}}\in \mathrm{St}%
\left[ a,\mathcal{V}\right] $ and $a_{\lambda _{\mathcal{U}}}\in \mathrm{St}%
\left[ t_{\lambda _{\mathcal{U}}}y_{\lambda _{\mathcal{U}}},\mathcal{U}%
\right] \subset \mathrm{St}\left[ t_{\lambda _{\mathcal{U}}}y_{\lambda _{%
\mathcal{U}}},\mathcal{V}\right] $, hence $t_{\lambda _{\mathcal{U}%
}}y_{\lambda _{\mathcal{U}}}\in \mathrm{St}\left[ a,\mathcal{W}\right] $,
because $\mathcal{V}\leqslant \frac{1}{2}\mathcal{W}$. It follows that the
subnet $\left( t_{\lambda _{\mathcal{U}}}y_{\lambda _{\mathcal{U}}}\right) _{%
\mathcal{U}\in \mathcal{O}}$ of $\left( t_{\lambda }y_{\lambda }\right)
_{\lambda \in \Lambda }$ converges to $a$, and therefore $\left( S,X\right) $
is $\mathcal{F}$-asymptotically compact. Finally, we show that $\mathcal{A}=%
\underset{Y\in \mathfrak{B}}{\bigcup }\omega \left( Y,\mathcal{F}\right) $.
Indeed, if $Y\in \mathfrak{B}$ then $\omega \left( Y,\mathcal{F}\right) $ is
the smallest closed subset of $X$ that attracts $Y$, by Proposition \ref%
{limitset2}. Hence $\omega \left( Y,\mathcal{F}\right) \subset \mathcal{A}$
for every $Y\in \mathfrak{B}$. Since $\mathcal{A}$ is invariant and closed,
we have $\mathcal{A}=\omega \left( \mathcal{A},\mathcal{F}\right) $. As $%
\mathcal{A}\in \mathfrak{B}$, $\mathcal{A}=\underset{Y\in \mathfrak{B}}{%
\bigcup }\omega \left( Y,\mathcal{F}\right) $. Conversely, assume that $%
\omega \left( Y,\mathcal{F}\right) $ is invariant, for every bounded subset $%
Y\subset X$, and $\left( S,X\right) $ is $\mathcal{F}$-asymptotically
compact and $\mathcal{F}$-bounded dissipative (and $\mathcal{F}$-eventually
bounded). Let $\mathfrak{B}$ be the collection of all nonempty bounded
subsets of $X$ and define $\mathcal{A}=\underset{Y\in \mathfrak{B}}{\bigcup }%
\omega \left( Y,\mathcal{F}\right) $. By Proposition \ref{limitset2}, $%
\omega \left( Y,\mathcal{F}\right) $ is nonempty, compact, and attracts $Y$
for every $Y\in \mathfrak{B}$. Hence $\mathcal{A}$ is nonempty and attracts
every bounded subset of $X$. Moreover, $\mathcal{A}$ is invariant, because
each $\omega \left( Y,\mathcal{F}\right) $ is invariant by hypothesis. Since 
$\left( S,X\right) $ is $\mathcal{F}$-bounded dissipative, there is a
bounded subset $D\subset X$ which absorbs $Y$ whenever $Y\in \mathfrak{B}$.
It follows that $\omega \left( Y,\mathcal{F}\right) \subset \mathrm{cls}%
\left( D\right) $ for all $Y\in \mathfrak{B}$. Hence $\mathcal{A}\subset 
\mathrm{cls}\left( D\right) $ and $\omega \left( \mathcal{A},\mathcal{F}%
\right) \subset \omega \left( D,\mathcal{F}\right) $. Since $\mathcal{A}$ is
invariant, we have $\mathcal{A}=\omega \left( \mathcal{A},\mathcal{F}\right)
\subset \omega \left( D,\mathcal{F}\right) $. On the other hand, as $D$ is
bounded, $\omega \left( D,\mathcal{F}\right) \subset \mathcal{A}$, and
therefore $\mathcal{A}=\omega \left( D,\mathcal{F}\right) $. By Proposition %
\ref{limitset2} again, $\mathcal{A}$ is compact. Thus $\mathcal{A}$ is the
global $\mathcal{F}$-attractor for $\left( S,X\right) $.\bigskip
\end{proof}

By considering point dissipative semigroup actions, Theorem \ref{1.3} yields
the following criteria for the existence of the global attractor.

\begin{theorem}
Assume that the limit set $\omega \left( Y,\mathcal{F}\right) $ is invariant
for every bounded subset $Y$ of $X$, the filter basis $\mathcal{F}$
satisfies both Hypotheses $\mathrm{H}_{3}$ and $\mathrm{H}_{4}$, and the
semigroup action $\left( S,X\right) $ is eventually compact, $\mathcal{F}$%
-eventually bounded, and $\mathcal{F}$-point dissipative. Then $\left(
S,X\right) $ has a global $\mathcal{F}$-attractor.
\end{theorem}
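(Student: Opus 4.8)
The plan is to reduce everything to the converse part of Theorem \ref{1.3}: since the $\omega$-limit sets of bounded subsets are assumed invariant and the action is already $\mathcal{F}$-eventually bounded, it suffices to verify that $\left(S,X\right)$ is $\mathcal{F}$-asymptotically compact and $\mathcal{F}$-bounded dissipative, and then Theorem \ref{1.3} produces the global $\mathcal{F}$-attractor $\mathcal{A}=\bigcup_{Y\in\mathfrak{B}}\omega\left(Y,\mathcal{F}\right)$. Asymptotic compactness is immediate: the action is eventually compact and $\mathcal{F}$-eventually bounded and $\mathcal{F}$ satisfies $\mathrm{H}_{4}$, so Proposition \ref{1.1} applies directly. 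Thus the whole weight of the argument falls on upgrading $\mathcal{F}$-point dissipativeness to $\mathcal{F}$-bounded dissipativeness, which I would do in two stages: first absorb compact sets, then use eventual compactness to absorb bounded sets.

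For the compact stage, let $D$ be a bounded set absorbing every point and fix $\mathcal{U}_{0}\in\mathcal{O}$; by Remark \ref{R2} the star $D'=\mathrm{St}\left[D,\mathcal{U}_{0}\right]$ is bounded and, being a star, open, with $D\subset D'$. Given a compact $K\subset X$, for each $x\in K$ choose $A_{x}\in\mathcal{F}$ with $A_{x}x\subset D$ and a single $a_{x}\in A_{x}$ (possible since $\emptyset\notin\mathcal{F}$); continuity of $\mu_{a_{x}}$ makes $V_{x}=\mu_{a_{x}}^{-1}\left(D'\right)$ an open neighbourhood of $x$ with $\mu_{a_{x}}\left(V_{x}\right)\subset D'$. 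I extract a finite subcover $V_{x_{1}},\dots,V_{x_{m}}$ of $K$ with associated elements $a_{1},\dots,a_{m}$. By $\mathcal{F}$-eventual boundedness there is $A_{\ast}\in\mathcal{F}$ with $\hat{D}=A_{\ast}D'$ bounded, and Hypothesis $\mathrm{H}_{3}$ yields, for each $i$, some $B_{i}\in\mathcal{F}$ with $B_{i}\subset A_{\ast}a_{i}$; the filter-basis property then gives $B\in\mathcal{F}$ contained in $\bigcap_{i}B_{i}$. For $x\in K$ I pick $i$ with $x\in V_{x_{i}}$, so that $a_{i}x\in D'$ and hence $Bx\subset\left(A_{\ast}a_{i}\right)x=A_{\ast}\left(a_{i}x\right)\subset A_{\ast}D'=\hat{D}$. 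Therefore $BK\subset\hat{D}$, i.e. the bounded set $\hat{D}$ absorbs every compact set.

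The passage from compact to bounded sets is where eventual compactness enters. Picking $g\in S$ with $\mu_{g}$ compact, for any bounded $Y\subset X$ the set $K=\mathrm{cls}\left(\mu_{g}\left(Y\right)\right)$ is compact, so there is $B\in\mathcal{F}$ with $BK\subset\hat{D}$; Hypothesis $\mathrm{H}_{3}$ applied to $g$ gives $B'\in\mathcal{F}$ with $B'\subset Bg$, whence $B'Y\subset\left(Bg\right)Y=B\mu_{g}\left(Y\right)\subset BK\subset\hat{D}$. Thus $\hat{D}$ absorbs every bounded set and $\left(S,X\right)$ is $\mathcal{F}$-bounded dissipative. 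Combining this with the asymptotic compactness from Proposition \ref{1.1}, the invariance hypothesis, and $\mathcal{F}$-eventual boundedness, the converse of Theorem \ref{1.3} delivers the global $\mathcal{F}$-attractor.

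I expect the main obstacle to be the compact-set absorption step, since point dissipativeness controls orbits only one point at a time and only through the single instant $a_{x}$, so the real difficulty is to manufacture a \emph{uniform} tail of times valid for all of $K$ simultaneously. The device that makes it work is the interplay of three ingredients: a finite subcover (continuity plus compactness of $K$), eventual boundedness to convert the finitely many one-shot hits $a_{i}x\in D'$ into a single bounded tail $A_{\ast}D'$, and Hypothesis $\mathrm{H}_{3}$ to replace the awkward products $A_{\ast}a_{i}$ by a genuine member of $\mathcal{F}$. The points to double-check are that $\mathrm{St}\left[D,\mathcal{U}_{0}\right]$ is open and bounded, that $A_{\ast}\left(a_{i}x\right)\subset A_{\ast}D'$ uses only associativity of the action, and that a common $B\subset\bigcap_{i}B_{i}$ indeed exists by finite intersection in the filter basis $\mathcal{F}$.
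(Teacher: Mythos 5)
Your proposal is correct and follows essentially the same route as the paper: Proposition \ref{1.1} for asymptotic compactness, then upgrading point dissipativeness to bounded dissipativeness via the open bounded star of the absorbing set, a finite subcover of the compact set $\mathrm{cls}\left(\mu_{g}\left(Y\right)\right)$, eventual boundedness to get a single bounded absorbing set, and two applications of Hypothesis $\mathrm{H}_{3}$, finishing with the converse of Theorem \ref{1.3}. The only (cosmetic) difference is that you isolate the absorption of compact sets as a separate stage, whereas the paper folds it into the absorption of bounded sets in one pass.
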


\begin{proof}
By Proposition \ref{1.1}, $\left( S,X\right) $ is $\mathcal{F}$%
-asymptotically compact. Since $\left( S,X\right) $ is $\mathcal{F}$-point
dissipative, there is a bounded subset $D_{0}$ of $X$ that absorbs every
point $x\in X$. For a given $\mathcal{U}\in \mathcal{O}$, we define the
bounded set 
\begin{equation*}
D_{1}=\mathrm{St}\left[ D_{0},\mathcal{U}\right] .
\end{equation*}%
As $\left( S,X\right) $ is $\mathcal{F}$-eventually bounded, there is $A\in 
\mathcal{F}$ such that the set $D=AD_{1}$ is bounded. This set $D$ absorbs
every bounded subset $Y$ of $X$. In fact, since the action is eventually
compact, there is $t\in S$ such that $K=\mathrm{cls}\left( tY\right) $ is
compact. For each $x\in K$, there is $A_{x}\in \mathcal{F}$ such that%
\begin{equation*}
A_{x}x\subset D_{0}\subset D_{1}.
\end{equation*}%
Hence, for $t_{x}\in A_{x}$, there is $\mathcal{U}_{x}\in \mathcal{O}$ such
that $t_{x}\mathrm{St}\left[ x,\mathcal{U}_{x}\right] \subset D_{1}$. By
Hypothesis $\mathrm{H}_{3}$, there is $A_{x}^{\prime }\in \mathcal{F}$ such
that $A_{x}^{\prime }\subset At_{x}$. Then we have%
\begin{equation*}
A_{x}^{\prime }\mathrm{St}\left[ x,\mathcal{U}_{x}\right] \subset At_{x}%
\mathrm{St}\left[ x,\mathcal{U}_{x}\right] \subset AD_{1}=D.
\end{equation*}%
Now we consider the open covering $K\subset \bigcup_{x\in K}\mathrm{St}\left[
x,\mathcal{U}_{x}\right] $. Since $K$ is compact, there are points $%
x_{1},...,x_{n}\in K$ such that $K\subset \bigcup_{j=1}^{n}\mathrm{St}\left[
x_{j},\mathcal{U}_{x_{j}}\right] $. Take $B\in \mathcal{F}$ such that $%
B\subset \bigcap_{j=1}^{n}A_{x_{j}}^{\prime }$. Then%
\begin{equation*}
BK\subset B\left( \bigcup_{j=1}^{n}\mathrm{St}\left[ x_{j},\mathcal{U}%
_{x_{j}}\right] \right) =\bigcup_{j=1}^{n}B\mathrm{St}\left[ x_{j},\mathcal{U%
}_{x_{j}}\right] \subset \bigcup_{j=1}^{n}A_{x_{j}}^{\prime }\mathrm{St}%
\left[ x_{j},\mathcal{U}_{x_{j}}\right] \subset D,
\end{equation*}%
that is, $B\mathrm{cls}\left( tY\right) \subset D$. Finally, take $B^{\prime
}\in \mathcal{F}$ such that $B^{\prime }\subset Bt$. It follows that 
\begin{equation*}
B^{\prime }Y\subset BtY\subset D,
\end{equation*}%
and therefore $D$ absorbs $Y$. Thus, the semigroup action $\left( S,X\right) 
$ is $\mathcal{F}$-bounded dissipative. By Proposition \ref{1.1}, $\left(
S,X\right) $ is also $\mathcal{F}$-asymptotically compact. The proof follows
by Theorem \ref{1.3}.
\end{proof}

We now relate global attractor to global uniform attractor.

\begin{theorem}
\label{GlobalUniform} Assume that $\mathcal{A}\subset X$ is a nonempty set
that is closed, compact, and invariant. If $\mathcal{A}$ is the global $%
\mathcal{F}$-attractor for $\left( S,X\right) $ then $\mathcal{A}$ is the
global uniform $\mathcal{F}$-attractor for $\left( S,X\right) $. The
converse holds if $\mathcal{F}$ satisfies Hypothesis $\mathrm{H}_{3}$ and $%
\left( S,X\right) $ is eventually compact and $\mathcal{F}$-asymptotically
compact.
\end{theorem}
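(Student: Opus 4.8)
The plan is to prove the two implications separately, using Proposition \ref{P3} (attraction phrased via $\mathcal{F}$-divergent nets) and Proposition \ref{R5} (passage between convergence of points and the $\rho_{\mathcal{A}}$-values) as the main tools, together with the fact that the global attractor forces asymptotic compactness.

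For the forward implication, suppose $\mathcal{A}$ is the global $\mathcal{F}$-attractor. By Theorem \ref{1.3} the action is then $\mathcal{F}$-asymptotically compact, and this is the only extra input I need. To see that $J\left( x,\mathcal{F}\right) \neq \emptyset $ for each $x\in X$, I would take any net $t_{\lambda }\rightarrow _{\mathcal{F}}\infty $ and apply asymptotic compactness to the bounded constant net $x_{\lambda }=x$: a convergent subnet of $\left( t_{\lambda }x\right) $ produces a point of $J\left( x,\mathcal{F}\right) $. For the inclusion $J\left( X,\mathcal{F}\right) \subset \mathcal{A}$, let $y\in J\left( x,\mathcal{F}\right) $ with witnessing nets $t_{\lambda }\rightarrow _{\mathcal{F}}\infty $, $x_{\lambda }\rightarrow x$, $t_{\lambda }x_{\lambda }\rightarrow y$. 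The key observation is that $B=\mathrm{St}[ x,\mathcal{U}] $ is a bounded open neighborhood of $x$ (boundedness by Remark \ref{R2}, since singletons are bounded), so the tail of $\left( x_{\lambda }\right) $ lies in the bounded set $B$. As $\mathcal{A}$ attracts $B$, Proposition \ref{P3} gives $\rho _{\mathcal{A}}\left( t_{\lambda }B\right) \rightarrow \mathcal{O}$, whence $\rho _{\mathcal{A}}\left( t_{\lambda }x_{\lambda }\right) \rightarrow \mathcal{O}$ along the tail because $t_{\lambda }x_{\lambda }\in t_{\lambda }B$. Since $t_{\lambda }x_{\lambda }\rightarrow y$, Proposition \ref{R5} yields $y\in \mathrm{cls}\left( \mathcal{A}\right) =\mathcal{A}$.

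For the converse, assume $\mathcal{A}$ is the global uniform $\mathcal{F}$-attractor, that $\mathcal{F}$ satisfies $\mathrm{H}_{3}$, and that $\left( S,X\right) $ is eventually compact and $\mathcal{F}$-asymptotically compact. Since $\mathcal{A}$ is already nonempty, closed, compact, and invariant, it remains only to show that $\mathcal{A}$ attracts every bounded $Y\subset X$. I would argue by contradiction: if $\mathcal{A}$ does not attract $Y$, there is $\mathcal{U}\in \mathcal{O}$ with $AY\nsubseteq \mathrm{St}[ \mathcal{A},\mathcal{U}] $ for all $A\in \mathcal{F}$. Fix $t\in S$ with $\mu _{t}$ a compact map, so that $K=\mathrm{cls}\left( tY\right) $ is compact. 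For each $A\in \mathcal{F}$, Hypothesis $\mathrm{H}_{3}$ provides $B_{A}\in \mathcal{F}$ with $B_{A}\subset At$; choosing $b_{A}\in B_{A}$ and $y_{A}\in Y$ with $b_{A}y_{A}\notin \mathrm{St}[ \mathcal{A},\mathcal{U}] $, I may write $b_{A}=a_{A}t$ with $a_{A}\in A$, so that $b_{A}y_{A}=a_{A}\left( ty_{A}\right) $ with $ty_{A}\in K$. The net $\left( a_{A}\right) $, indexed by $\mathcal{F}$ under reverse inclusion, $\mathcal{F}$-diverges, while compactness of $K$ together with $\mathcal{F}$-asymptotic compactness lets me pass to a subnet along which $ty_{A}\rightarrow z\in K$ and $a_{A}\left( ty_{A}\right) \rightarrow x$. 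By the definition of the prolongational limit set, $x\in J\left( z,\mathcal{F}\right) \subset J\left( X,\mathcal{F}\right) \subset \mathcal{A}$. However, each $b_{A}y_{A}=a_{A}\left( ty_{A}\right) $ lies in the closed set $X\setminus \mathrm{St}[ \mathcal{A},\mathcal{U}] $, so $x\notin \mathrm{St}[ \mathcal{A},\mathcal{U}] $ and hence $x\notin \mathcal{A}$, a contradiction. Thus $\mathcal{A}$ attracts every bounded set and is the global $\mathcal{F}$-attractor.

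I expect the delicate step to be the converse: the factorization $b_{A}=a_{A}t$ furnished by $\mathrm{H}_{3}$ must be coordinated with eventual compactness so that the ``moving'' part $ty_{A}$ is trapped in the fixed compact set $K$ while the divergence is transferred to $a_{A}$, which is precisely what manufactures a prolongational limit point. Managing the two successive subnet extractions (first to converge $ty_{A}$, then $a_{A}\left( ty_{A}\right) $) and verifying that $\left( a_{A}\right) $ still $\mathcal{F}$-diverges after reindexing is where care is required. The forward direction, by contrast, is essentially a direct application of Theorem \ref{1.3}, the boundedness of stars (Remark \ref{R2}), and Proposition \ref{R5}.
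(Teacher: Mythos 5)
Your proof is correct, and both halves follow the same structural skeleton as the paper's (Theorem \ref{1.3} to get asymptotic compactness in the forward direction; eventual compactness to produce the compact set $K=\mathrm{cls}\left( tY\right) $ and Hypothesis $\mathrm{H}_{3}$ to transfer attraction from $tY$ back to $Y$ in the converse), but the execution is genuinely different: you never invoke Lemma \ref{propwJ}, which is the paper's main device in both directions. For the inclusion $J\left( X,\mathcal{F}\right) \subset \mathcal{A}$ the paper cites $J\left( U,\mathcal{F}\right) \subset \omega \left( U,\mathcal{F}\right) $ for the open bounded set $U=\mathrm{St}\left[ x,\mathcal{U}\right] $ together with the characterization $\mathcal{A}=\bigcup_{Y\in \mathfrak{B}}\omega \left( Y,\mathcal{F}\right) $ from Theorem \ref{1.3}, whereas you re-derive exactly what is needed inline via Propositions \ref{P3} and \ref{R5} (trapping the tail of $x_{\lambda }$ in the bounded star and pushing $\rho _{\mathcal{A}}$ to $\mathcal{O}$); this is self-contained and avoids the external reference, at the cost of essentially reproving the relevant half of Lemma \ref{propwJ}. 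For the converse the paper argues directly --- $\omega \left( K,\mathcal{F}\right) \subset J\left( K,\mathcal{F}\right) \subset \mathcal{A}$ and Proposition \ref{limitset2} give that $\mathcal{A}$ attracts $K$, then $BY\subset AtY$ finishes --- while you argue by contradiction, manufacturing a prolongational limit point in the closed set $X\setminus \mathrm{St}\left[ \mathcal{A},\mathcal{U}\right] $; your two successive subnet extractions and the verification that $\left( a_{A}\right) $ still $\mathcal{F}$-diverges are all sound, and the factorization $b_{A}=a_{A}t$ is legitimate since $b_{A}\in B_{A}\subset At$. The paper's direct route is slightly shorter once Lemma \ref{propwJ} and Proposition \ref{limitset2} are available; yours makes the role of asymptotic compactness and of the prolongational limit set more explicit. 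No gaps.
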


\begin{proof}
Suppose that $\mathcal{A}$ is the global $\mathcal{F}$-attractor for $\left(
S,X\right) $ and let $x\in X$. By Theorem \ref{1.3}, $\left( S,X\right) $ is 
$\mathcal{F}$-asymptotically compact, hence $\omega \left( x,\mathcal{F}%
\right) \neq \emptyset $, by Proposition \ref{limitset2}. Since $\omega
\left( x,\mathcal{F}\right) \subset J\left( x,\mathcal{F}\right) $ we have $%
J\left( x,\mathcal{F}\right) \not=\emptyset $. Take any $\mathcal{U}\in 
\mathcal{O}$. Since $\mathrm{St}\left[ x,\mathcal{U}\right] $ is a bounded
set, we have $\omega \left( \mathrm{St}\left[ x,\mathcal{U}\right] ,\mathcal{%
F}\right) \subset \mathcal{A}$, by Theorem \ref{1.3}. By Lemma \ref{propwJ},
it follows that $J\left( \mathrm{St}\left[ x,\mathcal{U}\right] ,\mathcal{F}%
\right) \subset \omega \left( \mathrm{St}\left[ x,\mathcal{U}\right] ,%
\mathcal{F}\right) \subset \mathcal{A}$. Hence $J\left( X,\mathcal{F}\right)
\subset \mathcal{A}$, and therefore $\mathcal{A}$ is the global uniform $%
\mathcal{F}$-attractor for $\left( S,X\right) $. For the converse, suppose
that $\mathcal{F}$ satisfies Hypothesis $\mathrm{H}_{3}$, $\left( S,X\right) 
$ is eventually compact and $\mathcal{F}$-asymptotically compact, $J\left( x,%
\mathcal{F}\right) \not=\emptyset $ for all $x\in X$, and $J\left( X,%
\mathcal{F}\right) \subset \mathcal{A}$. Let $Y\subset X$ be a bounded
subset and take $t\in S$ such that $\mathrm{cls}\left( tY\right) $ is
compact. By Lemma \ref{propwJ}, $\omega \left( \mathrm{cls}\left( tY\right) ,%
\mathcal{F}\right) \subset J\left( \mathrm{cls}\left( tY\right) ,\mathcal{F}%
\right) $, and then $\omega \left( \mathrm{cls}\left( tY\right) ,\mathcal{F}%
\right) \subset \mathcal{A}$. By Proposition \ref{limitset2}, $\omega \left( 
\mathrm{cls}\left( tY\right) ,\mathcal{F}\right) $ attracts $\mathrm{cls}%
\left( tY\right) $, hence $\mathcal{A}$ attracts $\mathrm{cls}\left(
tY\right) $. Then, for a given $\mathcal{U}\in \mathcal{O}$, there is $A\in 
\mathcal{F}$ such that $A\mathrm{cls}\left( tY\right) \subset \mathrm{St}%
\left[ \mathcal{A},\mathcal{U}\right] $. By Hypothesis $\mathrm{H}_{3}$,
there is $B\in \mathcal{F}$ such that $B\subset At$. Then we have $BY\subset
AtY\subset \mathrm{St}\left[ \mathcal{A},\mathcal{U}\right] $. Hence $%
\mathcal{A}$ attracts $Y$, and therefore $\mathcal{A}$ is the global $%
\mathcal{F}$-attractor.
\end{proof}

We have the following consequence from Proposition \ref{1.1} and Theorem \ref%
{GlobalUniform}.

\begin{corollary}
Assume that the family $\mathcal{F}$ satisfies both Hypotheses $\mathrm{H}%
_{3}$ and $\mathrm{H}_{4}$ and the semigroup action $\left( S,X\right) $ is $%
\mathcal{F}$-eventually bounded and eventually compact. Let $\mathcal{A}%
\subset X$ be a nonempty set that is closed, compact, and invariant. Then $%
\mathcal{A}$ is the global $\mathcal{F}$-attractor for $\left( S,X\right) $
if and only if $\mathcal{A}$ is the global uniform $\mathcal{F}$-attractor
for $\left( S,X\right) $.
\end{corollary}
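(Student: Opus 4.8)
The plan is to recognize that this corollary is a direct consolidation of Proposition \ref{1.1} and Theorem \ref{GlobalUniform}; no new dynamical argument is required, only the observation that the hypotheses of the corollary package together the ingredients needed to invoke both directions of Theorem \ref{GlobalUniform}. First I would dispose of the forward implication: if $\mathcal{A}$ is the global $\mathcal{F}$-attractor, then since $\mathcal{A}$ is assumed nonempty, closed, compact, and invariant, the forward half of Theorem \ref{GlobalUniform} applies verbatim and yields that $\mathcal{A}$ is the global uniform $\mathcal{F}$-attractor. This direction uses none of the extra hypotheses on $\mathcal{F}$ or on the action.

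The substance lies in the converse. The converse half of Theorem \ref{GlobalUniform} requires three things beyond $\mathcal{A}$ being closed, compact, and invariant: that $\mathcal{F}$ satisfy Hypothesis $\mathrm{H}_3$, that $(S,X)$ be eventually compact, and that $(S,X)$ be $\mathcal{F}$-asymptotically compact. The first two are assumed directly in the corollary. The single missing hypothesis is $\mathcal{F}$-asymptotic compactness, and the key step is to supply it from the remaining assumptions. Here I would invoke Proposition \ref{1.1}: because $\mathcal{F}$ satisfies Hypothesis $\mathrm{H}_4$ and $(S,X)$ is both eventually compact and $\mathcal{F}$-eventually bounded, Proposition \ref{1.1} gives that $(S,X)$ is $\mathcal{F}$-asymptotically compact.

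With asymptotic compactness now in hand, all three hypotheses of the converse half of Theorem \ref{GlobalUniform} are met, so I would apply it to conclude that if $\mathcal{A}$ is the global uniform $\mathcal{F}$-attractor then it is the global $\mathcal{F}$-attractor. Combining the two implications gives the stated equivalence.

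I do not expect any genuine obstacle here; the only thing to watch is the bookkeeping of which hypothesis feeds which result. In particular one must note that Proposition \ref{1.1} is driven by Hypothesis $\mathrm{H}_4$ (together with eventual compactness and $\mathcal{F}$-eventual boundedness), whereas Hypothesis $\mathrm{H}_3$ is exactly what the converse of Theorem \ref{GlobalUniform} consumes. The real content of the corollary is thus the remark that, under $\mathrm{H}_4$, the pair \emph{eventually compact} plus \emph{$\mathcal{F}$-eventually bounded} serves as a convenient and verifiable substitute for the $\mathcal{F}$-asymptotic compactness appearing in Theorem \ref{GlobalUniform}.
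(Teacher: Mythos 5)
Your proposal is correct and follows exactly the route the paper intends: the corollary is stated as a consequence of Proposition \ref{1.1} and Theorem \ref{GlobalUniform}, with Hypothesis $\mathrm{H}_{4}$, eventual compactness, and $\mathcal{F}$-eventual boundedness supplying the $\mathcal{F}$-asymptotic compactness needed for the converse direction of Theorem \ref{GlobalUniform}, and Hypothesis $\mathrm{H}_{3}$ consumed there as well. Nothing further is needed.
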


\section{Examples}

In this section we present illustrating examples for the setting of this
paper by exploring semigroup actions on function spaces.

Let $E$ be a normed vector space endowed with the admissible family $%
\mathcal{O}_{\mathrm{d}}$ as stated in Example \ref{Ex1}. For finite
sequences $\alpha =\left\{ x_{1},...,x_{k}\right\} $ in $E$ and $\epsilon
=\left\{ \mathcal{U}_{\varepsilon _{1}},...,\mathcal{U}_{\varepsilon
_{k}}\right\} $ in $\mathcal{O}_{\mathrm{d}}$, let $\mathcal{U}_{\alpha
}^{\epsilon }$ be the cover of $E^{E}$ given by the sets of the form $%
\tprod_{x\in \mathbb{R}^{n}}U_{x}$ where $U_{x_{i}}=\mathrm{B}\left(
a_{i},\varepsilon _{i}\right) \in \mathcal{U}_{\varepsilon _{i}}$, for $%
i=1,...,k$, and $U_{x}=E$ otherwise. The family $\mathcal{O}_{\mathrm{p}%
}=\left\{ \mathcal{U}_{\alpha }^{\epsilon }\right\} $ is a base for the
uniformity of pointwise convergence on $E^{E}$ (see e.g. \cite[Corollary
37.13]{Will}).

Let $E^{E}$ endowed with the pointwise convergence topology. Then the
inclusion map $i:E\hookrightarrow E^{E}$, where $i\left( x\right) $ is the
constant function $i\left( x\right) \equiv x$, is a continuous map. It is
well-known that $E^{E}$ is not metrizable with the pointwise convergence
topology.

We start with an example of global uniform attractor that is not global
attractor, showing that the existence of global uniform attractor does not
assure asymptotic compactness.

\begin{example}
Assume that $E$ is the $n$-dimensional Euclidean vector space and take $%
a=\left( a_{1},...,a_{n}\right) \in \left( 0,1\right) ^{n}$. Let $\mu
:E\times E^{E}\rightarrow E^{E}$ be the action of $E$ on $E^{E}$ given by 
\begin{equation*}
\mu \left( \left( t_{1},,,t_{n}\right) ,f\right) =\left(
a_{1}^{t_{1}}f_{1},...,a_{n}^{t_{n}}f_{n}\right)
\end{equation*}%
where $f=\left( f_{1},...,f_{n}\right) $. For each $k\in \mathbb{N}$, define
the set $A_{k}=\left\{ \left( t_{1},...,t_{n}\right) \in E:t_{i}\geq
k\right\} $. Consider the filter basis $\mathcal{F}=\left\{ A_{k}:k\in 
\mathbb{N}\right\} $ for limit behavior. If $t_{\lambda }\rightarrow _{%
\mathcal{F}}\infty $, it is easily seen that $t_{\lambda }f\left( x\right)
=\mu \left( t_{\lambda },f\right) \left( x\right) \rightarrow 0$, for every $%
f\in E^{E}$ and $x\in E$, that is, $i\left( 0\right) \in \omega \left( f,%
\mathcal{F}\right) $, for every $f\in E^{E}$. This means that $i\left(
0\right) \in J\left( f,\mathcal{F}\right) $ for every $f\in E^{E}$. Now,
suppose that $g\in J\left( f,\mathcal{F}\right) $. Then there are nets $%
t_{\lambda }\rightarrow _{\mathcal{F}}\infty $ and $f_{\lambda }\rightarrow
f $ such that $t_{\lambda }f_{\lambda }\rightarrow g$. Writing $t_{\lambda
}=\left( t_{1}^{\lambda },...,t_{n}^{\lambda }\right) $, $f_{\lambda
}=\left( f_{1}^{\lambda },...,f_{n}^{\lambda }\right) $, and $f=\left(
f_{1},...,f_{n}\right) $, we have $a_{i}^{t_{i}^{\lambda }}\rightarrow 0$
and $f_{i}^{\lambda }\left( x\right) \rightarrow f_{i}\left( x\right) $, for
all $x\in E$. Hence $a_{i}^{t_{i}^{\lambda }}f_{i}^{\lambda }\left( x\right)
\rightarrow 0$, and therefore $t_{\lambda }f_{\lambda }\left( x\right)
\rightarrow 0$, for all $x\in E$. This means that $g=i\left( 0\right) $, and
then $J\left( f,\mathcal{F}\right) =\left\{ i\left( 0\right) \right\} $, for
every $f\in E^{E}$. Thus $\mathcal{A}=\left\{ i\left( 0\right) \right\} $ is
the global uniform $\mathcal{F}$-attractor for $\left( E,E^{E},\mu \right) $%
. We now explain that $\left( E,E^{E},\mu \right) $ does not have global $%
\mathcal{F}$-attractor, that is, $\mathcal{A}$ is not the global $\mathcal{F}
$-attractor for $\left( E,E^{E},\mu \right) $. Take $\alpha =\left\{ \left(
0,...,0\right) \right\} $ and $\beta =\left\{ \left( 1,...,1\right) \right\} 
$ in $E$, $\epsilon =\left\{ \mathcal{U}_{1}\right\} $ in $\mathcal{O}_{%
\mathrm{d}}$, and define $Y=\mathrm{St}\left[ i\left( 0\right) ,\mathcal{U}%
_{\alpha }^{\epsilon }\right] $. Then $Y$ is bounded set, by definition. To
show that $\mathcal{A}$ is not the global $\mathcal{F}$-attractor for $%
\left( E,E^{E},\mu \right) $, it is enough to show that there is no $k\in 
\mathbb{N}$ such that $A_{k}Y\subset \mathrm{St}\left[ i\left( 0\right) ,%
\mathcal{U}_{\beta }^{\epsilon }\right] $. In fact, note that $f\in \mathrm{%
St}\left[ i\left( 0\right) ,\mathcal{U}_{\beta }^{\epsilon }\right] $
implies that $f\left( 1,...,1\right) ,0\in \mathrm{B}\left( b,1\right) $ for
some $b\in B$, hence $\left\Vert f\left( 1,...,1\right) \right\Vert <2$. For
each $k\in \mathbb{N}$, define $f_{k}\in E^{E}$ by $f_{k}\left( x\right)
=2\left( a_{1}^{-k}x_{1},...,a_{n}^{-k}x_{n}\right) $, where $x=\left(
x_{1},...,x_{n}\right) $. Then $f_{k}\left( 0\right) =0$, hence $f_{k}\in Y$%
. However, $\left\Vert \left( k,...,k\right) f_{k}\left( 1,...,1\right)
\right\Vert =\left\Vert \left( 2,...,2\right) \right\Vert >2$, hence $\left(
k,...,k\right) f_{k}\notin \mathrm{St}\left[ i\left( 0\right) ,\mathcal{U}%
_{\beta }^{\epsilon }\right] $. It follows that $\left( k,...,k\right)
f_{k}\in A_{k}Y\setminus \mathrm{St}\left[ i\left( 0\right) ,\mathcal{U}%
_{\beta }^{\epsilon }\right] $, for all $k\in \mathbb{N}$, and therefore $%
\mathcal{A}$ is not the global $\mathcal{F}$-attractor for $\left(
E,E^{E},\mu \right) $.
\end{example}

\begin{example}
\label{Ex4}Let $\mathbb{N}$ be the semigroup of positive integers with
multiplication and $\mu :\mathbb{N}\times E^{E}\rightarrow E^{E}$ the action
given by $\mu \left( n,f\right) =f^{n}$. By considering the filter basis $%
\mathcal{F}=\left\{ \left\{ k\in \mathbb{N}:k\geq n\right\} :n\in \mathbb{N}%
\right\} $, $n_{\lambda }\rightarrow _{\mathcal{F}}\infty $ in $\mathbb{N}$
means $n_{\lambda }\rightarrow +\infty $. Let $K\subset E$ be a compact set
and $X\subset E^{E}$ be the subspace of all contraction maps of $E$ with
fixed point in $K$ and same Lipschitz constant $L<1$. Then $X$ is invariant
and $i\left( K\right) \subset X$ is a compact, closed, and invariant set in $%
X$. Consider the restriction action $\mu :\mathbb{N}\times X\rightarrow X$.
We claim that $i\left( K\right) $ is the global $\mathcal{F}$-attractor for
this action. Indeed, for each $f\in X$, denote by $x_{f}$ the fixed point of 
$f$. For sequences $\alpha =\left\{ z_{1},...,z_{m}\right\} $ in $E$ and $%
\epsilon =\left\{ \mathcal{U}_{\varepsilon _{1}},...,\mathcal{U}%
_{\varepsilon _{m}}\right\} $ in $\mathcal{O}_{\mathrm{d}}$, define $\delta
=\sup \left\{ \left\Vert z_{j}-x_{f}\right\Vert :f\in X,j=1,...,m\right\} $.
Pick $n_{0}\in \mathbb{N}$ such that $L^{n_{0}}\delta <\min \left\{
\varepsilon _{1},...,\varepsilon _{m}\right\} $. Then, for any $f\in X$ and $%
n\geq n_{0}$, we have 
\begin{equation*}
\left\Vert f^{n}\left( z_{j}\right) -x_{f}\right\Vert <L^{n}\left\Vert
z_{j}-x_{f}\right\Vert <L^{n_{0}}\delta <\min \left\{ \varepsilon
_{1},...,\varepsilon _{m}\right\}
\end{equation*}%
for all $j=1,...,m$. Hence $f^{n},i\left( x_{f}\right) \in \tprod_{x\in 
\mathbb{R}^{n}}U_{x}$, where $U_{z_{j}}=\mathrm{B}\left( x_{f},\varepsilon
_{j}\right) $, for $j=1,...,m$, and $U_{x}=E$ otherwise, that is, $f^{n}\in 
\mathrm{St}\left[ i\left( x_{f}\right) ,\mathcal{U}_{\alpha }^{\epsilon }%
\right] $ for every $f\in X$ and $n\geq n_{0}$. It follows that $n\geq n_{0}$
implies 
\begin{equation*}
\rho _{i\left( K\right) }\left( nX\right) =\bigcap\limits_{f\in X}\rho
\left( f^{n},i\left( K\right) \right) \prec \bigcap\limits_{f\in X}\rho
\left( f^{n},i\left( x_{f}\right) \right) \prec \left\{ \mathcal{U}_{\alpha
}^{\epsilon }\right\} .
\end{equation*}%
Hence $\rho _{i\left( K\right) }\left( nX\right) \rightarrow \mathcal{O}_{%
\mathrm{p}}$, and therefore $i\left( K\right) $ is the global $\mathcal{F}$%
-attractor for $\left( \mathbb{N},X,\mu \right) $. Moreover, by Theorem \ref%
{GlobalUniform}, $i\left( K\right) $ is the global uniform $\mathcal{F}$%
-attractor for $\left( \mathbb{N},X,\mu \right) $ and $J\left( X,\mathcal{F}%
\right) =i\left( K\right) $.
\end{example}

\begin{example}
As a special case of Example \ref{Ex4}, let $X\subset E^{E}$ be the subspace
of all contraction operators with same Lipschitz constant $L<1$. Then $%
i\left( 0\right) $ is the global $\mathcal{F}$-attractor for $\left( \mathbb{%
N},X,\mu \right) $.
\end{example}

\begin{example}
\label{Ex3} For $K>0$ and $0<L<1$, let $X\subset E^{E}$ be the subspace of
all Lipschitzian map of $E$ with same Lipschitz constant $K$ and $S\subset
E^{E}$ the topological semigroup of contraction operators $S=\left\{ T\in 
\mathcal{L}\left( E\right) :\left\Vert T\right\Vert \leq L<1\right\} $.
Consider the action $\mu :S\times X\rightarrow X$ of $S$ on $E^{E}$ given by
the composition $\mu \left( T,f\right) =T\circ f$. For a given $m\in \mathbb{%
N}$, set $S_{m}=\left\{ T^{k}:T\in S,k\geq m\right\} $. For $p,q\in \mathbb{N%
}$, we have $S_{m}=S_{p}\cap S_{q}$, where $m=\max \left\{ p,q\right\} $.
Hence the family $\mathcal{F}=\left\{ S_{m}:m\in \mathbb{N}\right\} $ is a
filter basis on the subsets of $S$. We claim that $\mathcal{A}=\left\{
i\left( 0\right) \right\} $ is the global $\mathcal{F}$-attractor for $%
\left( S,X,\mu \right) $. In fact, for all $T\in S$ and $x\in \mathbb{R}^{n}$%
, we have $T\circ i\left( 0\right) \left( x\right) =T\left( 0\right) =0$,
hence $T\circ i\left( 0\right) =i\left( 0\right) $, for every $T\in S$, and
therefore $\mathcal{A}$ is invariant. Now, let $Y\subset E^{E}$ be a bounded
set. Then there are sequences $\beta =\left\{ x_{1},...,x_{n}\right\} $ in $%
E $ and $\delta =\left\{ \mathcal{U}_{\delta _{1}},...,\mathcal{U}_{\delta
_{n}}\right\} $ in $\mathcal{O}_{\mathrm{d}}$ such that $Y\subset \mathrm{St}%
\left[ h,\mathcal{U}_{\beta }^{\delta }\right] $ for some $h\in E^{E}$. It
follows that $\left\Vert f\left( x_{i}\right) -g\left( x_{i}\right)
\right\Vert <4\delta _{i}$ for all $f,g\in Y$ and $i=1,...,n$. Take $%
T_{\lambda }\rightarrow _{\mathcal{F}}\infty $ and sequences $\alpha
=\left\{ z_{1},...,z_{m}\right\} $ in $E$ and $\epsilon =\left\{ \mathcal{U}%
_{\varepsilon _{1}},...,\mathcal{U}_{\varepsilon _{m}}\right\} $ in $%
\mathcal{O}_{\mathrm{d}}$. For any $f,g\in Y$ and $j=1,...,m$, we have 
\begin{eqnarray*}
\left\Vert f\left( z_{j}\right) -g\left( z_{j}\right) \right\Vert &\leq
&\left\Vert f\left( z_{j}\right) -f\left( x_{1}\right) \right\Vert
+\left\Vert f\left( x_{1}\right) -g\left( x_{1}\right) \right\Vert
+\left\Vert g\left( x_{1}\right) -g\left( z_{j}\right) \right\Vert \\
&<&2K\left\Vert z_{j}-x_{1}\right\Vert +4\delta _{1}.
\end{eqnarray*}%
Hence the set $\left\{ f\left( z_{j}\right) :f\in Y,j=1,...,m\right\} $ is
bounded in $E$. Suppose that $\left\Vert f\left( z_{j}\right) \right\Vert
\leq M$ for all $f,g\in Y$ and $j=1,...,m$. Since $\left\Vert T_{\lambda
}\right\Vert \leq L<1$, for a given $k\in \mathbb{N}$ there is $\lambda _{0}$
such that $\lambda \geq \lambda _{0}$ implies 
\begin{equation*}
\left\Vert T_{\lambda }\left( x\right) -T_{\lambda }\left( y\right)
\right\Vert \leq L^{k}\left\Vert x-y\right\Vert
\end{equation*}%
for all $x,y\in E$. Then, for any $f\in Y$, we have 
\begin{equation*}
\left\Vert T_{\lambda }\left( f\left( z_{j}\right) \right) \right\Vert \leq
L^{k}\left\Vert f\left( z_{j}\right) \right\Vert \leq L^{k}M
\end{equation*}%
for all $j=1,...,m$ and $\lambda \geq \lambda _{0}$. Take $k\in \mathbb{N}$
such that $L^{k}M<\min \left\{ \varepsilon _{1},...,\varepsilon _{m}\right\} 
$. Then there is $\lambda _{0}$ such that $\lambda \geq \lambda _{0}$
implies $\left\Vert T_{\lambda }f\left( z_{j}\right) \right\Vert <\min
\left\{ \varepsilon _{1},...,\varepsilon _{m}\right\} $ for all $j=1,...,m$
and $f\in Y$, hence $T_{\lambda }f,i\left( 0\right) \in \tprod_{x\in \mathbb{%
R}^{n}}U_{x}$, where $U_{z_{j}}=\mathrm{B}\left( 0,\varepsilon _{j}\right) $%
, for $j=1,...,m$, and $U_{x}=E$ otherwise, that is, $T_{\lambda }f\in 
\mathrm{St}\left[ i\left( 0\right) ,\mathcal{U}_{\alpha }^{\epsilon }\right] 
$. This means that $\mathcal{U}_{\alpha }^{\epsilon }\in \rho \left(
T_{\lambda }Y,i\left( 0\right) \right) $ whenever $\lambda \geq \lambda _{0}$%
, and then $\rho _{\mathcal{A}}\left( T_{\lambda }Y\right) \rightarrow 
\mathcal{O}_{\mathrm{p}}$. Therefore $\mathcal{A}$ is the global $\mathcal{F}
$-attractor for $\left( S,E^{E},\mu \right) $. Moreover, $\mathcal{A}$ is
the global uniform $\mathcal{F}$-attractor for $\left( S,X,\mu \right) $ and 
$J\left( X,\mathcal{F}\right) =\left\{ i\left( 0\right) \right\} $.

This example can be easily extended by taking the topological semigroup $%
S\subset E^{E}$ of all contractions with same fixed point $x_{0}$ and same
Lipschitz constant $L<1$. In this case, $\mathcal{A}=\left\{ i\left(
x_{0}\right) \right\} $ is the global $\mathcal{F}$-attractor for $\left(
S,X\right) $.
\end{example}

\end{document}